\numberwithin{equation}{section}
\newcommand{\half}{\frac{1}{2}}
\DeclareMathOperator{\inter}{int}
\renewcommand{\phi}{\varphi}
\newcommand{\pw}{\mathcal{P}W_\pi}
\newcommand{\la}{\lambda}
\newcommand{\La}{\Lambda}
\newcommand{\lng}{\langle}
\newcommand{\rng}{\rangle}
\newcommand{\R}{{\Bbb R}}
\newcommand{\Cp}{{\Bbb C}_+}
\newcommand{\Cm}{{\Bbb C}_-}
\newcommand{\Cpm}{{\Bbb C}_\pm}
\newcounter{tmp}
\newcommand{\closspan}{\overline{{\rm span}}}
\newcommand{\conv}{conv}
\theoremstyle{plain}
\newtheorem{theorem}{Theorem}[section]
\newtheorem{proposition}{Proposition}[section]
\newtheorem{lemma}{Lemma}[section]
\newtheorem{definition}{Definition}[section]
\newtheorem{remark}{Remark}[section]
\begin{document}  
\title[On summation of non-harmonic Fourier series]{On summation of non-harmonic Fourier series}
\author{ Yurii Belov, Yurii Lyubarskii}

\thanks{The first author was supported by the Chebyshev Laboratory 
(St. Petersburg State University) under RF Government grant 11.G34.31.0026,
by JSC "Gazprom Neft", and  by RFBR grants 12-01-31492 and 14-01-31163. The second author is partly supported by the Norwegian Research Council project DIMMA \#213638.
Part of this work was done while the authors were staying at the Center for Advanced Study, Norwegian Academy of Science, and they would like to express their gratitude to the institute for the hospitality. }

\address{Yurii Belov,
\newline 
Chebyshev Laboratory,
St.Petersburg State University,
St.Petersburg, Russia,
\newline {\tt j\_b\_juri\_belov@mail.ru}
\newline\newline \phantom{x}\,\, 
Yurii Lyubarskii, 
\newline
Dept. Mathematical Sciences, Norwegian University of Science and
Technology,NO-7491 Trondheim, Norway,
\newline {\tt yura@math.ntnu.no}
}

\begin{abstract}
Let a sequence $\Lambda\subset\mathbb{C}$ be such that the corresponding system of exponential functions $\mathcal{E}(\Lambda):=\{e^{i\lambda t}\}_{\lambda\in\Lambda}$ is complete and minimal in $L^2(-\pi,\pi)$
and thus each function $f\in L^2(-\pi,\pi)$ corresponds to a non-harmonic Fourier series in $\mathcal{E}(\Lambda)$. 
We prove that if the generating function $G$ of $\Lambda$ satisfies Muckenhoupt $(A_2)$ condition on $\mathbb{R}$, then this series admits a linear summation method.
 Recent results  show that $(A_2)$ condition cannot be omitted.
\end{abstract}

 \keywords{non-harmonic Fourier series \and summation methods \and Paley-Wiener spaces \and Muckenhoupt condition \and Lagrange interpolation}
\subjclass{42A24 \and 42A63 \and 30D10}

\maketitle
 
 
\section{Introduction and main results}

Let a sequence $\Lambda\subset\mathbb{C}$ be such that the corresponding exponential system $\mathcal{E}(\Lambda):=\{e^{i\lambda t}\}_{\lambda\in\Lambda}$ is complete and minimal in $L^2(-\pi,\pi)$. By $\{\phi_\lambda\}_{\lambda\in\Lambda}\subset L^2(-\pi,\pi)$ we denote the biorthogonal system: $(e^{i\mu t}, \phi_\lambda)=\delta_{\lambda,\mu}$, $\lambda,\mu\in\Lambda$. To each function $f\in L^2(-\pi,\pi)$  one can associate  its non-harmonic Fourier series in $\mathcal{E}(\Lambda)$:
\begin{equation}
f \sim \sum_{\lambda\in\Lambda}(f,\phi_\lambda)e^{i\lambda t}.
\label{mainS}
\end{equation}
 It was shown  by R.Young in \cite{Young} that the biorthogonal system $\{\phi_{\lambda}\}_{\lambda\in\Lambda}$ is also complete, hence, the Fourier coefficients $(f,\phi_\lambda)$ determine the
function $f$  uniquely. In this article we study reconstruction of $f$ from series \eqref{mainS} by a linear summation method.
\begin{definition}
Given a matrix $W=\{w(\lambda,n)\}_{\lambda\in\Lambda, n\in\mathbb{N}}$ with 
         \[
\lim_{n\rightarrow\infty}w(\lambda,n)=1, \quad \lim_{\lambda\rightarrow\infty}w(\lambda,n)=0,
\]    
we say that it provides a linear summation method for series \eqref{mainS} if, for any $f\in L^2(-\pi,\pi)$, the series
\begin{equation}
S_n(W,f):=\sum_{\lambda\in\Lambda}w(\lambda,n)(f,\phi_\lambda)e^{i\lambda t} 
\label{summ}
\end{equation}
 converges in $L^2(-\pi,\pi)$ and 
\[
S_n(W,f) \to f, \ \text{as} \ n\to \infty \ \text{in} \ L^2(-\pi,\pi).
\]
 \end{definition} 

It follows from the completeness and minimality of  $\mathcal{E}(\Lambda)$ (see, e.g. \cite[Lecture 17]{Lev}) that 
 there exists the generating function,
$$G(z):=G(0)\lim_{R\rightarrow\infty}\prod_{\lambda\in\Lambda,|\lambda|<R}\biggl{(}1-\frac{z}{\lambda}\biggr{)},$$
and that $G$ is of exponential type $\pi$ in both half-planes $\mathbb{C}_\pm$.

The summability properties of series \eqref{mainS} can be described through this function.
  In this introduction we assume that, for some $\delta>0$, 
all points in $\Lambda$ belong to a shifted upper half-plane, $\Lambda\subset\mathbb{C}_\delta:=\{\zeta: \Im\zeta>\delta\}$.  This is done in order to formulate the results in the introduction as simple as possible. Later in  the article we present the results in full generality, i.e. not assuming $\Lambda\subset\mathbb{C}_\delta$.

The simplest summability procedure corresponds to the case when $\mathcal{E}(\Lambda)$ forms an unconditional basis in $L^2(-\pi,\pi)$. Such systems are characterized by B. Pavlov (\cite{Pav}, see also \cite{LS} for another proof).

\begingroup
\setcounter{tmp}{\value{theorem}}
\setcounter{theorem}{0} 
\renewcommand\thetheorem{\Alph{theorem}}
\begin{theorem}
Let $\Lambda\subset\mathbb{C}_\delta$, $\delta>0$. In order that the system $\mathcal{E}(\Lambda)$ be an unconditional basis in $L^2(-\pi,\pi)$ it is necessary and sufficient that
\begin{itemize}
\begin{item}
$|G(x)|^2$ satisfies the Muckenhoupt condition $(A_2)$:
\begin{equation}
\sup_{I =[a,b]}\frac{1}{|I|^2}\int_I|G(x)|^2dx\int_I|G(x)|^{-2}dx<\infty, 
\label{M}
\end{equation}
\end{item}
\begin{item}
The set $\Lambda$ satisfies the Carleson  condition $(C)$:
\begin{equation}
\sup_{\lambda\in\Lambda}\sum_{\mu\in\Lambda,\mu\neq\lambda}\frac{(1+|\Im \lambda|)(1+|\Im\mu|)}{|\lambda-\mu|^2}<\infty.
\label{C}
\end{equation}
\end{item}
\end{itemize}
\end{theorem}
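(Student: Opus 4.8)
The plan is to transport the problem through the Fourier transform into the Paley--Wiener space $PW_\pi$ and there reduce the unconditional basis property to a weighted $L^2$ estimate that the Muckenhoupt condition governs exactly. First I would apply $\mathcal{F}\colon L^2(-\pi,\pi)\to PW_\pi$, under which $e^{i\lambda t}$ maps, up to a constant, to the reproducing kernel $k_{\bar\lambda}$ of $PW_\pi$. Hence $\mathcal{E}(\Lambda)$ is an unconditional (Riesz) basis in $L^2(-\pi,\pi)$ if and only if the normalized reproducing kernels indexed by $\Lambda$ form a Riesz basis in $PW_\pi$. The biorthogonal system is realized by the Lagrange-type functions $g_\lambda(z)=G(z)\bigl(G'(\lambda)(z-\lambda)\bigr)^{-1}$, which satisfy $g_\lambda(\mu)=\delta_{\lambda\mu}$; thus, up to normalization and the conjugation dictated by the Fourier convention, the coefficient functionals $(f,\phi_\lambda)$ become the values of $\hat f$ sampled along $\Lambda$, and reconstruction of $\hat f$ is the Lagrange series built from $G$.

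Second, using a Plancherel--P\'olya type quadrature comparison at the zeros of $G$ --- whose validity rests on the separation furnished by \eqref{C} --- I would show that the $\ell^2$ norm of these interpolation data is comparable to $\int_{\mathbb R}|\hat f(x)|^2|G(x)|^{-2}\,dx$, whereas $\|\hat f\|_{PW_\pi}^2=\int_{\mathbb R}|\hat f(x)|^2\,dx$ (recall $\Lambda\subset\mathbb C_\delta$ keeps $G$ zero-free on the line, so the weight is finite). The two-sided frame inequality $\sum_{\lambda\in\Lambda}|(f,\phi_\lambda)|^2\asymp\|f\|_{L^2}^2$ thereby becomes the equivalence of the weighted and unweighted $L^2$ norms on the single subspace $PW_\pi$, namely $\int_{\mathbb R}|\hat f|^2|G|^{-2}\,dx\asymp\int_{\mathbb R}|\hat f|^2\,dx$. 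Since elements of $PW_\pi$ arise by band-limiting, this subspace norm equivalence can be encoded as the boundedness, with bounded inverse, of a Toeplitz-type operator, equivalently of the Riesz (Cauchy) projection acting on $L^2(|G|^2\,dx)$.

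Third --- and here \eqref{M} enters decisively --- I would invoke the Hunt--Muckenhoupt--Wheeden theorem: the Riesz projection, equivalently the Hilbert transform, is bounded on $L^2(w\,dx)$ precisely when $w$ satisfies the Muckenhoupt condition $(A_2)$. Taking $w=|G|^2$ and using that $(A_2)$ is invariant under $w\mapsto w^{-1}$, condition \eqref{M} is exactly what renders the reconstruction operator bounded and boundedly invertible, supplying both frame bounds. The Carleson condition \eqref{C} plays the complementary, geometric role: it forces $\Lambda$ to be uniformly separated in the hyperbolic metric of $\mathbb C_\delta$, makes $\Lambda$ weighted by $(1+|\Im\lambda|)$ a Carleson measure for the half-plane (securing the Bessel/upper bound and legitimizing the quadrature of the second step), and guarantees that the normalized kernels are uniformly minimal. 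For the necessity direction I would reverse the argument: a Riesz basis forces uniform minimality, hence \eqref{C}, while the converse half of Hunt--Muckenhoupt--Wheeden extracts \eqref{M} from the lower frame bound.

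The main obstacle is the second step: identifying the frame inequality with the weighted boundedness of the projection. This demands a careful factorization of $G$ separating the contribution of $\Im\lambda$ from the boundary modulus $|G(x)|$, quantitative control of the off-diagonal interactions $g_\lambda(\mu)$ for $\mu$ near $\lambda$ (where \eqref{C} is used in its full strength), and a proof that the discrete data norm and the continuous weighted integral are genuinely comparable rather than merely formally analogous. Once the reduction is arranged so that exactly $|G|^2\in(A_2)$ appears --- and not some distorted variant of the weight --- the Muckenhoupt equivalence is classical; engineering that precise linearization is the delicate technical heart of the proof.
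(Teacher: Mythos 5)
A preliminary remark: the paper does not actually prove this statement --- it is quoted as Pavlov's Theorem A, with the proof attributed to \cite{Pav}, \cite{HNP}, \cite{Min} and \cite{LS} --- so your proposal must be judged against those classical arguments, and against them it has a genuine flaw, located exactly where you yourself place ``the delicate technical heart'': the second step. You claim that a Plancherel--P\'olya quadrature along $\Lambda$ identifies the two-sided frame inequality $\sum_\lambda|(f,\phi_\lambda)|^2\asymp\|f\|^2$ with the norm equivalence $\int_{\mathbb{R}}|\hat f|^2|G|^{-2}dx\asymp\int_{\mathbb{R}}|\hat f|^2dx$ on $PW_\pi$. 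This identification is false, even under the hypotheses of the theorem. Take the Kadec-type sequence $\lambda_n=n+\frac{1}{8}\,\mathrm{sgn}(n)+i$: since multiplication by $e^{-t}$ is bounded and invertible on $L^2(-\pi,\pi)$, Kadec's $1/4$-theorem shows $\mathcal{E}(\Lambda)$ is a Riesz basis, conditions \eqref{M} and \eqref{C} both hold, and the generating function satisfies $|G(x)|^2\asymp(1+|x|)^{-1/2}$. Yet for $F_R(z)=\sin(\pi(z-R))/(\pi(z-R))$ one has $\|F_R\|^2\asymp1$ and $\sum_n|F_R(\lambda_n)|^2\asymp1$ (all $\Im\lambda_n=1$, so the kernel norms are comparable to a constant), while
\begin{equation*}
\int_{\mathbb{R}}|F_R(x)|^2|G(x)|^{-2}dx\asymp\int_{\mathbb{R}}\frac{(1+|x|)^{1/2}}{1+(x-R)^2}\,dx\asymp(1+R)^{1/2}\longrightarrow\infty.
\end{equation*}
So the two sides of your proposed identification are not comparable. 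The reason is structural: $G$ vanishes at the nodes, so the sampled data $|F(\lambda)|^2$, weighted by $\|k_\lambda\|^{-2}$ as the frame inequality dictates, simply do not see the weight $|G|^{-2}$; an honest discretization of $\int|F|^2|G|^{-2}$ would carry weights comparable to $|G|^{-2}$ near $\Re\lambda$, which in this example differ from the kernel-norm weights by the unbounded factor $(1+|x|)^{1/2}$.

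Because of this, your third step has nothing valid to latch onto: Hunt--Muckenhoupt--Wheeden is invoked against a reduction that does not hold, and the sketch of necessity (``converse HMW from the lower frame bound'') inherits the same defect. In the actual proofs, $(A_2)$ enters one level deeper. One bounds the reconstruction operator $(a_\lambda)\mapsto\sum_\lambda a_\lambda G(z)/(G'(\lambda)(z-\lambda))$ by writing its $L^2(dx)$ norm as the $L^2(|G|^2dx)$ norm of the Cauchy transform $\sum_\lambda a_\lambda/(G'(\lambda)(z-\lambda))$ of a discrete measure, and controls that via the weighted Hilbert transform bound together with the Carleson embedding furnished by \eqref{C}; the Bessel (upper) bound uses \eqref{C} alone. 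For necessity, \eqref{C} does follow from uniform minimality and the frame bounds, as you say, but $|G|^2\in(A_2)$ is extracted either through invertibility of a Toeplitz operator with unimodular symbol and the Helson--Szeg\H{o}/Devinatz--Widom theorem (the route of \cite{HNP}) or by testing the interpolation operator on explicit families of functions (the route of \cite{LS}) --- never from a pointwise norm equivalence between $L^2(|G|^{-2}dx)$ and $L^2(dx)$ on $PW_\pi$, which, as the example shows, cannot hold except for essentially constant weights. Note that the paper itself uses precisely the correct one-directional mechanism ($(A_2)\Rightarrow$ boundedness of $\mathcal{P}_\pm$ on $L^2(\mathbb{R},|G|^{\pm2})\Rightarrow$ bounded projections onto weighted model subspaces) in Sections \ref{s3}--\ref{s4}; repairing your step 2 would amount to rebuilding that machinery.
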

\endgroup
\setcounter{theorem}{\thetmp} 

This theorem was proved by Pavlov under the additional restriction $\sup_{\lambda\in\Lambda} |\Im\lambda|<\infty$ and by Nikolski, see
\cite{HNP}, assuming only $\inf_{\lambda\in\Lambda}\Im\lambda>-\infty$. Finally, Minkin \cite{Min} got rid of all assumptions on $\Lambda$.

One of the motivations for our article is to trace "collaboration" between the two conditions. It seems that the $(A_2)$ condition provides existence of some linear summation method, 
while the choice of this method depends upon "how far"  the sequence $\Lambda$ is from  condition $(C)$.

It is almost straightforward 
that if the generating function $G$ is of exponential type $\pi$ in both halfplanes $\mathbb{C}_\pm$ 
and also $|G(x)|^2\in(A_2)$, then the system $\mathcal{E}(\Lambda)$ is complete and minimal.

\begin{theorem}
Let   the generating function $G$ be of exponential type $\pi$ in both 
halfplanes $\mathbb{C}_\pm$ and also satisfy the  Muckenhoupt condition \eqref{M}. 
Then $\mathcal{E}(\Lambda)$ admits a linear summation method in $L^2(\pi,\pi)$.
\label{main}
\end{theorem}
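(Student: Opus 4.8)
The plan is to pass to the Fourier/Paley--Wiener side, recognise the series as a reproducing-kernel (equivalently Lagrange interpolation) expansion in $\PWp$, and then build the summation method by grouping the frequencies into finite clusters and summing cluster by cluster. The entire difficulty is concentrated in a single uniform-boundedness estimate, and that is exactly where the Muckenhoupt condition \eqref{M} is used.

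First I would apply the Fourier transform $\mathcal F\colon L^2(-\pi,\pi)\to\PWp$ and set $F=\mathcal F f$. Under $\mathcal F$ the exponentials $e^{i\lambda t}$ go to sine-type functions $K_\lambda$ and, by biorthogonality and Parseval, the biorthogonal functions $\phi_\lambda$ go (up to a constant and conjugation) to the Lagrange cardinal functions
\[
g_\lambda(z)=\frac{G(z)}{G'(\lambda)(z-\lambda)},\qquad g_\lambda(\mu)=\delta_{\lambda,\mu}.
\]
Thus the operator $f\mapsto S_n(W,f)$ of \eqref{summ} transforms into a weighted reproducing-kernel expansion operator on $\PWp$ whose adjoint is the block \emph{interpolation} operator $F\mapsto\sum_\lambda w(\lambda,n)F(\lambda)g_\lambda$; since the weights are real, the two have equal norm, so it suffices to bound the interpolation operators uniformly. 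Now, since $\mathcal E(\Lambda)$ is complete, finite linear combinations of the $e^{i\lambda t}$ are dense, and on such combinations any admissible $W$ already gives convergence term by term (each weight tends to $1$). Hence, by Banach--Steinhaus, the task reduces to exhibiting weights $w(\lambda,n)$ with $\lim_n w(\lambda,n)=1$ and $\lim_\lambda w(\lambda,n)=0$ for which these operators are \emph{uniformly} bounded.

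For the construction I would partition $\R$ into a sequence of intervals $\{I_k\}$ and collect the nearby frequencies into finite clusters $\Lambda_k=\{\lambda\in\Lambda:\re\lambda\in I_k\}$, choosing the lengths so that each cluster is finite and the zeros of $G$ inside $I_k$ stay within bounded hyperbolic distance (the regularity of the zero set forced by \eqref{M} makes this possible). Set $w(\lambda,n)=1$ if $\lambda$ lies in one of the first $n$ clusters and $w(\lambda,n)=0$ otherwise; the normalisation conditions are then immediate. The corresponding partial sum is the block operator
\[
T_N F=\sum_{k\le N}\sum_{\lambda\in\Lambda_k}F(\lambda)\,g_\lambda=G(z)\sum_{k\le N}\sum_{\lambda\in\Lambda_k}\frac{a_\lambda}{z-\lambda},\qquad a_\lambda=\frac{F(\lambda)}{G'(\lambda)},
\]
and, since $F/G=\sum_\lambda a_\lambda/(z-\lambda)$ is the partial-fraction expansion of $F/G$, passing to the weighted space yields
\[
\norm{T_N F}_{L^2(\R)}^2=\int_{\R}|G(x)|^2\Bigl|\sum_{k\le N}\sum_{\lambda\in\Lambda_k}\frac{a_\lambda}{x-\lambda}\Bigr|^2dx,\qquad \norm{F}_{L^2(\R)}^2=\int_{\R}|G(x)|^2\Bigl|\frac{F(x)}{G(x)}\Bigr|^2dx.
\]
Thus everything is reduced to the uniform boundedness, on the space $L^2(|G|^2\,dx)$, of the operators that truncate the Cauchy/partial-fraction expansion of $u=F/G$ to the clusters indexed by $k\le N$.

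This last estimate is the main obstacle, and it is precisely here that \eqref{M} enters: by the Hunt--Muckenhoupt--Wheeden theorem, $|G|^2\in(A_2)$ is equivalent to the boundedness of the Hilbert transform on $L^2(|G|^2\,dx)$. I would prove the uniform bound for $T_N$ by writing each cluster truncation as a localisation of the Cauchy transform of $\sum_\lambda a_\lambda\delta_\lambda$ to the interval $I_k$, expressing the tails through the Hilbert transform, and controlling the within-cluster contribution by a stable finite-dimensional interpolation estimate (this is where the clustering is indispensable: the individual terms $a_\lambda=F(\lambda)/G'(\lambda)$ are not separately controllable when zeros of $G$ are close, whereas a whole cluster is). Summing the resulting almost-orthogonal block estimates and invoking the $(A_2)$-boundedness of the Hilbert transform gives $\sup_N\norm{T_N}_{\PWp\to\PWp}<\infty$, hence $\sup_n\norm{S_n(W,\cdot)}<\infty$, which completes the proof. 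The half-plane normalisation $\Lambda\subset\mathbb{C}_\delta$ of the introduction is used only to equip $G$ with a clean Hermite--Biehler/de Branges structure; the general statement of Theorem~\ref{main} follows by the same scheme.
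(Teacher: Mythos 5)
Your overall reduction---passing to $\pw$ by the Fourier transform, noting that the adjoints of the operators $S_n(W,\cdot)$ are the weighted Lagrange interpolation operators, checking convergence on the dense set of finite linear combinations of reproducing kernels, and then invoking Banach--Steinhaus together with the Hunt--Muckenhoupt--Wheeden theorem---is exactly the paper's scheme (Section \ref{s2}). The genuine gap is in the construction of the matrix $W$. You take sharp cut-offs $w(\lambda,n)\in\{0,1\}$ supported on the first $n$ clusters $\Lambda_k=\{\lambda:\re\lambda\in I_k\}$, and you rest the entire proof on the claim that condition \eqref{M} ``forces'' the zero set to split into finite clusters of bounded hyperbolic diameter (implicitly also mutually separated, since otherwise your ``almost-orthogonal block estimates'' have no content). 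No such regularity follows from $(A_2)$, and you offer no argument for it: the Muckenhoupt condition does not imply the Carleson condition \eqref{C}, nor any grouped (Vasyunin-type) version of it, and it does not exclude chains of zeros of unbounded hyperbolic diameter along which consecutive zeros are arbitrarily close in the hyperbolic metric. Any partition of $\R$ into intervals must cut such a chain, leaving two nearly coincident zeros $\lambda,\mu$ on opposite sides of a cut; the retained term $F(\lambda)G(z)/\bigl(G'(\lambda)(z-\lambda)\bigr)$ then has denominator $G'(\lambda)$ nearly zero (it contains the factor coming from $\mu$), nothing in the partial sum compensates it, and the block operators $T_N$ cannot be uniformly bounded. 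In effect your argument works only for those $\Lambda$ that admit a generalized-Carleson grouping, i.e.\ exactly the case already covered by the classical Hru\v{s}\v{c}ev--Nikolskii--Pavlov/Vasyunin theory of unconditional bases of subspaces; the whole point of Theorem \ref{main} is that summation is possible without any such geometric hypothesis. (A smaller flaw: a cluster $\{\lambda:\re\lambda\in I_k\}$ need not even be finite, since zeros may accumulate vertically in a strip subject only to the Blaschke condition.)

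The fix---and the paper's key idea---is to build the damping into the weights rather than into the geometry. In the projection method (Sections \ref{s3}--\ref{s4}) one takes $w(\lambda,n)=\beta_n(\lambda)=B(\lambda)/B_n(\lambda)$ as in \eqref{Bneq}: the sum still runs over $|\lambda|<n$, but each retained term carries the value at $\lambda$ of the tail Blaschke product, which is automatically small precisely when some excluded zero is hyperbolically close to $\lambda$ and exactly offsets the small denominator $B'(\lambda)$. Uniform boundedness then needs no cluster geometry at all: it follows from the operator identity $P_{+,n}=I-B_n\mathcal P_+B_n^{\#}$, the unimodularity of $B_n$ on $\R$, and the $(A_2)$-boundedness of $\mathcal P_\pm$ on the weighted spaces (Lemma \ref{le:04}), after which $\Phi=Fe^{i\pi z}/\omega\in K_+(\omega)$ converts the statement into Theorem \ref{prth}. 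The paper's second, ``universal'' method likewise avoids indicator weights: it uses outer functions $w_n$ that are exponentially small off $[-l_n/2,l_n/2]$, with truncation contours chosen by a Hayman-type estimate (Lemma \ref{plemma}) so that $|B|$ is not too small along them. Either device supplies exactly the compensation mechanism that your $0$--$1$ weights lack.
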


\begin{remark} A weaker reconstruction property was proved by Gubreev and Tarasenko in \cite[Theorem 2.5]{Gubr} by using spectral decomposition of model operators in de Branges space. Namely if $|G(x)|^2\in(A_2)$, then 
$$f\in\closspan\{(f,\phi_\lambda)e^{i\lambda t}\}\text{ for any } f\in L^2(-\pi,\pi).$$
\end{remark}

This   follows of course from the summation theorem above, yet in Section \ref{hc} we give a simple explicit proof. The corresponding techniques proved to  be useful in other problems of this kind.

A short historical remark may be pertinent. Complete and minimal systems in (generally speaking) Banach spaces with total biorthogonal system are called strong $M$-basis or hereditarily complete systems if each vector in the space belongs to the subspace spanned by the non-zero elements of its Fourier series.  Existence of linear summation method may be considered as an extreme case of hereditary completeness.
Other examples of herididary complete systems have been considered by Markus in connection with spectral synthesis for linear operators, Katavolos, Lambrou and Papadakis in connection with reflexive algebras. Nikolski, Dovbysh and Sudakov found a parametrization of all nonhereditarily complete systems. We refer the reader to \cite{BBB} and references therein.

\begin{remark}
It is shown recently, see \cite{BBB}, that one cannot omit $(A_2)$ condition, generally speaking: there exist $\Lambda\in(C)$
and  $f\in L^2(-\pi,\pi)$ such that the system $\mathcal{E}(\Lambda)$ is complete and minimal in $L^2(-\pi,\pi)$ and,
\begin{equation}
f\not\in\closspan\{(f,\phi_\lambda)e^{i\lambda t}\}.
\label{nonhered}
\end{equation}
\end{remark}
On the other hand  it was recently proved \footnote{A. Borichev, private communication} that there exists hereditarily complete system $\mathcal{E}(\Lambda)$, $\Lambda\subset\mathbb{C}_\delta$, such that $|G(x)|^2\not\in(A_2)$.
The authors wonder if the following statement (converse to Theorem \ref{main}) is true:

{\it Let a sequence $\Lambda\subset\mathbb{C}_\delta$, $\delta>0$ be such that the system $\mathcal{E}(\Lambda)$ is complete and minimal and the series \eqref{mainS} admits a linear summation method. Then the generating function satisfies the Muckenhoupt condition: $|G(x)|^2\in(A_2).$}

We reformulate our problem in the Paley-Wiener space 
$$\pw:=\{F: F(z)=\frac{1}{2\pi}\int_{-\pi}^{\pi}g(t)e^{itz}dt,\quad g\in L^2(-\pi,\pi)\}.$$
The Fourier transform $\mathcal{F}$ acts unitarily from $L^2(-\pi,\pi)$ onto $\pw$, after this transform the exponential functions become the reproducing kernels in $\pw$,
$$\mathcal{F} \left ( e^{-i\bar{\lambda}t} \right ) =\frac{\sin(\pi(z-\bar{\lambda}))}{\pi(z-\bar{\lambda})}=:k_\lambda(z),$$
while the elements of the biorthogonal system become
$$\mathcal{F}(\phi_\lambda)=:G_\lambda=\frac{G(z)}{G'(\lambda)(z-\lambda)}.$$
Simple duality reasonings show that Theorem \ref{main} is equivalent to the following statement
 
\begin{theorem} Under the hypothesis of Theorem \ref{main} the Lagrange interpolating series 
\begin{equation}
F(z)\sim\sum_{\lambda\in\Lambda}F(\lambda)\frac{G(z)}{G'(\lambda)(z-\lambda)}
\label{interp}
\end{equation}
admits a linear summation method in $\pw$.
\label{main2}
\end{theorem}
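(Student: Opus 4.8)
The plan is to represent the weighted Lagrange sums in \eqref{interp} as smoothly truncated singular integrals and to extract their uniform boundedness directly from the Muckenhoupt condition \eqref{M}, via the Hunt--Muckenhoupt--Wheeden theorem. First I would record the residue identity behind \eqref{interp}: for $F\in\pw$ and a contour $\Gamma$ enclosing finitely many points of $\Lambda$ together with $z$, summation of the residues of $\zeta\mapsto F(\zeta)\bigl(G(\zeta)(\zeta-z)\bigr)^{-1}$ gives
\[
\sum_{\lambda\ \mathrm{inside}\ \Gamma}F(\lambda)\frac{G(z)}{G'(\lambda)(z-\lambda)}=F(z)-\frac{G(z)}{2\pi i}\oint_{\Gamma}\frac{F(\zeta)}{G(\zeta)(\zeta-z)}\,d\zeta .
\]
The decisive point is that the substitution $u:=F/G$ is, on the real line, an isometry of $\pw$ onto $L^2(\mathbb{R},|G(x)|^2\,dx)$, because $\int_{\mathbb{R}}|u|^2|G|^2=\int_{\mathbb{R}}|F|^2=\|F\|_{\pw}^2$. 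Since $\Lambda\subset\mathbb{C}_\delta$, the function $u$ is holomorphic in the lower half-plane and the remainder above is exactly $G(z)$ times a Cauchy integral of $u$; collapsing $\Gamma$ toward $\mathbb{R}$ turns it into a principal-value, i.e. Hilbert-transform, form of the Cauchy transform of $u$.

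I would then take the weights to be samples $w(\lambda,n):=W_n(\lambda)$ of a smooth multiplier adapted to scale $n$ --- for instance $W_n(\zeta)=\psi(\zeta/n)$ with $\psi$ holomorphic in a strip about $\mathbb{R}$, $\psi(0)=1$, and $\psi$ decaying along $\mathbb{R}$ --- so that the normalisations $w(\lambda,n)\to1$ as $n\to\infty$ and $w(\lambda,n)\to0$ as $\lambda\to\infty$ both hold. Inserting $W_n$ into the residue computation rewrites the partial sum $S_n$ of \eqref{summ} as $W_n(z)F(z)$ minus $G(z)$ times a smoothly truncated Cauchy transform of $W_n u$; equivalently, under the isometry $F\mapsto u$ the operator $S_n$ acts as the bounded combination $u\mapsto W_n u-H_n(W_n u)$, where $H_n$ is a truncation of the Hilbert transform $H$.

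Convergence on a dense set is then immediate: the biorthogonal system $\{G_\lambda\}$ is complete in $\pw$ (as recalled above), and on a generator $S_nG_\mu=W_n(\mu)\,G_\mu\to G_\mu$ because $G_\mu(\lambda)=\delta_{\lambda\mu}$. Hence Theorem \ref{main2} reduces entirely to the uniform bound $\sup_n\|S_n\|_{\pw\to\pw}<\infty$, which under the isometry is the uniform boundedness of $u\mapsto W_n u-H_n(W_n u)$ on $L^2(\mathbb{R},|G|^2\,dx)$ and therefore follows once $H$ is bounded on that space. This is exactly where \eqref{M} enters: by the Hunt--Muckenhoupt--Wheeden theorem the boundedness of $H$ on $L^2(\mathbb{R},|G(x)|^2\,dx)$ is equivalent to $|G|^2\in(A_2)$, and the smooth truncations $H_n$ are dominated by the maximal Hilbert transform, itself bounded on this $(A_2)$-weighted space, giving bounds uniform in $n$.

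The step I expect to be the main obstacle is making the identity ``weighted sum $=$ truncated Hilbert transform'' fully rigorous: one must choose $W_n$ holomorphic and decaying enough to justify collapsing $\Gamma$ onto $\mathbb{R}$ (and toward $i\infty$, where the zeros $\lambda$ accumulate) while retaining the two normalisation limits, control the boundary values of $u=F/G$ coming from the two half-planes, and verify that the truncation errors are uniformly dominated, so that the single hypothesis $|G|^2\in(A_2)$ indeed suffices to close the argument.
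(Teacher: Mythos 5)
Your setup --- the residue identity, the dense-set reduction to uniform boundedness of the weighted Lagrange operators, and the use of the Hunt--Muckenhoupt--Wheeden theorem for the real-axis contribution --- is exactly the skeleton of the paper's ``universal summation method'' (Section \ref{s2}). But the step you yourself flag as ``the main obstacle'' is not a technicality to be smoothed over; it is the actual mathematical content of the proof, and your proposed mechanism for it fails. The contour $\Gamma$ cannot be collapsed onto $\mathbb{R}$: it must pass \emph{through the upper half-plane}, separating the enclosed points $\Lambda_n$ from the rest of $\Lambda$, and on any such arc the quantity $1/|G(\zeta)|$ is not controlled by the $(A_2)$ condition \eqref{M}, which sees only $|G|$ on $\mathbb{R}$. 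After the factorization $e^{i\pi z}G=c\,\omega B$, the outer part $\omega$ is indeed governed by $(A_2)$ data, but the Blaschke factor $B$ can be exponentially small along prospective contours, since the zeros $\Lambda$ (which need satisfy no Carleson or separation condition, and are unbounded in the imaginary direction --- so your ``$\psi$ holomorphic in a strip'' ansatz already breaks down) can accumulate anywhere. Nothing resembling a truncated or maximal Hilbert transform on $L^2(\mathbb{R},|G|^2dx)$ sees this; that machinery handles only the segment $[-l_n,l_n]$ of the contour (the term $I_{1,n}$ in the paper), not the parts off the axis.

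The paper closes this gap with two ingredients your proposal lacks. First, a Hayman-type estimate (Proposition \ref{hayman-type}, Lemma \ref{plemma}) produces triangular contours $C_n$ as in \eqref{Gamma} avoiding an exceptional family of disks, on which one has only the weak lower bound $-\log|B(\zeta)|\le\varepsilon(|\zeta|)|\zeta|$ with $\varepsilon(t)\to0$; this smallness is genuine and cannot be removed. Second, the damping weights are not generic smooth multipliers but outer functions $w_n$ whose decay rate is \emph{matched} to this bound, $\alpha_n=1000\,\varepsilon(l_n/10)$, precisely so that $|w_n(\zeta)/B(\zeta)|\le1$ on the slanted sides $R_n$, $L_n$; the integrals there are then estimated by Carleson-measure and $H^2(\mathbb{C}_+)$ arguments (for $e^{-i\pi\zeta}F(\zeta)$ and $A(\zeta)/\omega(\zeta)$), not by weighted singular-integral bounds. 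Without the coupling between the contour geometry and the decay of the weights, the ratio $w_n/B$ on the contour is uncontrolled and the uniform bound $\sup_n\|T_n^*\|<\infty$ cannot be extracted from $(A_2)$ alone. (The paper also gives a second, genuinely different proof via weighted model spaces and the projections $P_{+,n}=I-B_n\mathcal P_+B_n^\#$ in Section \ref{s4}, where the weights are $\beta_n(\lambda)=B(\lambda)/B_n(\lambda)$; your proposal is not that argument either.)
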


\begin{remark}
In Theorems \ref{main}-\ref{main2} the Muckenhoupt condition \eqref{M} can be replaced by the same condition for
the function $G(x+ia)$, $a\in\mathbb{R}$.
\end{remark}
This statement holds since for any $a\in\mathbb{R}$ the norm 
$\|F\|_{(a)}:=\bigl{[}\int_{\mathbb{R}}|G(x+ia)|^2dx\bigr{]}^{\half}$ is an equivalent norm for Paley-Wiener space. In particular one can
 apply 
 Theorems \ref{main}-\ref{main2} for  sequences $\Lambda\subset\mathbb{R}$.

The compactwise summability of series \eqref{mainS} holds under even weaker hypothesis:
\begin{theorem} Let $G$ be an entire function of exponential type $\pi$ in both halfplanes $\mathbb{C}_\pm$ and
\begin{equation}
\int_\mathbb{R}\frac{|G(x)|^2}{1+|x^2|}dx<\infty,\qquad\int_\mathbb{R}\frac{dx}{|G(x)|^2(1+|x|^2)}<\infty.
\label{intG}
\end{equation}
Then there exists a linear summation method $W=\{w(\lambda,n)\}$ with compactwise convergence, namely,
$$\lim_{n\rightarrow\infty}\sum_{\lambda\in\Lambda}w(\lambda,n)F(\lambda)\frac{G(z)}{G'(\lambda)(z-\lambda)}=F(z),\text{ uniformly for } z\in K$$
for all $F\in\pw$ and $K\Subset \mathbb{C}$.
\label{pwise}
\end{theorem}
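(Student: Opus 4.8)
The plan is to exhibit an explicit Abel-type summation method and to identify the summed series with a Cauchy-type integral via residues. I work in the Paley--Wiener picture of Theorem \ref{main2}, so the goal is to sum the interpolation series \eqref{interp}. To fix ideas assume first that $\Im\lambda\ge 0$ for every $\lambda\in\Lambda$; the general case is recovered by splitting the sum according to the sign of $\Im\lambda$ and applying the symmetric lower--half-plane construction to the remaining part, and, when convenient, by the Remark after Theorem \ref{main2} reducing $\Lambda$ to a shifted half-plane. Take the multiplier $\Omega_n(\zeta):=e^{i\zeta/n}$ and set $w(\lambda,n):=\Omega_n(\lambda)$. Then $w(\lambda,n)\to1$ locally uniformly as $n\to\infty$, while $|\Omega_n(\zeta)|=e^{-\Im\zeta/n}\to0$ as $\zeta\to\infty$ in $\mathbb{C}_+$, so that $W$ is admissible. (When $\Im\lambda$ stays bounded one multiplies $\Omega_n$ by a factor of tiny exponential type and quadratic decay on $\mathbb{R}$, such as $\bigl(\sin(\zeta/2n^2)/(\zeta/2n^2)\bigr)^2$, to force $|w(\lambda,n)|\to0$ along the real direction as well; this perturbs none of the estimates below.)

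For fixed $n$ consider $\Phi_n(\zeta):=\Omega_n(\zeta)F(\zeta)G(z)\big/\bigl(G(\zeta)(\zeta-z)\bigr)$ and integrate it over the boundary of the upper half-plane. Since $F,G$ are of exponential type $\pi$, the ratio $F/G$ is of borderline type in the direction $\Im\zeta\to+\infty$, and the factor $e^{i\zeta/n}$ supplies the genuine decay $e^{-\Im\zeta/n}$ that makes $\Phi_n$ small on large upper semicircles (taken to avoid the zeros of $G$); hence the contour closes in $\mathbb{C}_+$. Collecting the residues at the points $\lambda\in\Lambda$, which produce $-w(\lambda,n)F(\lambda)G_\lambda(z)$, and, when $z\in\mathbb{C}_+$, at $\zeta=z$, which produces $\Omega_n(z)F(z)$, I obtain the exact identity
\[
\sum_{\lambda\in\Lambda} w(\lambda,n)F(\lambda)\frac{G(z)}{G'(\lambda)(z-\lambda)} = \Omega_n(z)F(z)\,\mathbf 1_{\mathbb{C}_+}(z) - J_n(z),\qquad J_n(z):=\frac{G(z)}{2\pi i}\int_{\mathbb{R}}\frac{\Omega_n(x)F(x)}{G(x)(x-z)}\,dx ,
\]
valid for $z\in\mathbb{C}_+\cup\mathbb{C}_-$; in particular the series on the left converges for each $n$.

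It remains to let $n\to\infty$. The boundary term obeys $\Omega_n(z)F(z)\to F(z)$ locally uniformly. For $J_n$, note that $F\in L^2(\mathbb{R})$ and, by the second condition in \eqref{intG}, $(1+x^2)^{-1/2}G(x)^{-1}\in L^2(\mathbb{R})$; since $|\Omega_n(x)|=1$, for $z$ in a compact set the integrand is dominated, uniformly in $n$, by
$\frac{|F(x)|}{|G(x)|\,|x-z|}=\Bigl(\tfrac{(1+x^2)^{1/2}|F(x)|}{|x-z|}\Bigr)\cdot\Bigl(\tfrac{1}{(1+x^2)^{1/2}|G(x)|}\Bigr)$,
a product of two $L^2$ functions, hence an $L^1$ function. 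By dominated convergence $J_n(z)\to J_\infty(z):=\frac{G(z)}{2\pi i}\int_{\mathbb{R}}\frac{F(x)}{G(x)(x-z)}\,dx$. One then evaluates this defect integral by closing the contour in the lower half-plane, where $F/G$ is holomorphic: if $z\in\mathbb{C}_+$ no pole is enclosed and $J_\infty(z)=0$, while if $z\in\mathbb{C}_-$ the pole at $\zeta=z$ gives $J_\infty(z)=-F(z)$. In either case the right-hand side of the displayed identity tends to $F(z)$, which is the asserted reconstruction.

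The main obstacle is exactly the two contour closings, that is, the vanishing of the semicircular means of $\Phi_n$ in $\mathbb{C}_+$ and of $F(\zeta)/\bigl(G(\zeta)(\zeta-z)\bigr)$ in $\mathbb{C}_-$. This is where the strength of the hypotheses enters and must be handled honestly rather than by crude type counting: the exponential-type assumption on $G$ furnishes, via a minimum-modulus argument, lower bounds $|G(\zeta)|\gtrsim e^{\pi|\Im\zeta|}$ off small exceptional disks around the zeros, while \eqref{intG} places the relevant functions in Hardy classes of the half-planes. Concretely, writing $F=e^{i\pi\zeta}F_-$ with $F_-\in H^2(\mathbb{C}_-)$ and combining $e^{-i\pi\zeta}/G$ with $(\zeta-z)^{-1}$, conditions \eqref{intG} identify $F(\zeta)/\bigl(G(\zeta)(\zeta-z)\bigr)$ as a function of class $H^1(\mathbb{C}_-)$, for which the semicircular integrals vanish; this yields the stated values of $J_\infty$, and the analogous bookkeeping justifies the closing in $\mathbb{C}_+$ for each fixed $n$. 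Finally, all the above bounds are uniform for $z$ in a compact subset of $\mathbb{C}_+\cup\mathbb{C}_-$, so the summed series converges to $F$ locally uniformly off the real axis; since the partial sums are then a normal family of entire functions with limit $F$ on a set of uniqueness, Vitali's theorem upgrades the convergence to locally uniform convergence on every $K\Subset\mathbb{C}$, completing the proof.
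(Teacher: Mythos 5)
Your identity and the evaluation of the limiting defect integral are set up correctly: the residue bookkeeping is right, and your treatment of $J_\infty$ by writing $F(\zeta)e^{-i\pi\zeta}\in H^2(\Cm)$ against $1/\bigl(\omega^\#(\zeta)(\zeta-z)\bigr)\in H^2(\Cm)$ (legitimate under the second condition in \eqref{intG}) is sound. The fatal gap is the step you describe as "the contour closes in $\Cp$" for fixed $n$ -- and this is not a technicality, it is exactly the problem that the bulk of Section \ref{s2} of the paper is built to solve. Your justification rests on the claim that exponential type gives, via a minimum-modulus argument, $|G(\zeta)|\gtrsim e^{\pi|\Im\zeta|}$ off small exceptional disks around the zeros. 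No such bound holds. Writing $e^{i\pi\zeta}G=c\,\omega B$ in $\Cp$, the sharp general statement (Hayman's theorem; Proposition \ref{hayman-type} in the paper) is only $-\log|B(\zeta)|\le\varepsilon(|\zeta|)\,|\zeta|$ off a family of disks of finite view, with $\varepsilon(t)\to 0$: the error is $o(|\zeta|)$, not $O(1)$, and near the real axis it is not controlled at all except at specially chosen abscissas $\pm l_n$ with $(\arg B)'(\pm l_n)\lesssim 1$, where one gets $-\log|B(\zeta)|\le C_B\Im\zeta+C_B$ (Lemma \ref{BLemma2}). Now compare this with your multiplier: $|\Omega_n(\zeta)|=e^{-\Im\zeta/n}$, and the sinc-squared correction adds only polynomial decay along $\R$. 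On the part of a large semicircle where $\Im\zeta$ is small compared with $|\zeta|$, your multiplier contributes essentially nothing while $1/|B(\zeta)|$ can be of size $e^{o(|\zeta|)}\to\infty$; and even in the favorable near-real regime governed by Lemma \ref{BLemma2}, the product $|\Omega_n(\zeta)|/|B(\zeta)|\lesssim e^{(C_B-1/n)\Im\zeta}$ \emph{grows} along the contour as soon as $n>1/C_B$, i.e.\ for all large $n$. So the semicircular means need not vanish, the residue identity for fixed $n$ is unproved, and with it even the convergence of your infinite series for fixed $n$ (note that $1/|G'(\lambda)|$ is not bounded by anything under the hypotheses, so weights with only polynomial decay in the real direction do not make $S_n$ obviously well defined).

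This diagnosis also explains the architecture of the paper's proof, which is what your proposal silently replaces: the sums are truncated to the finite sets $\La_n=\La\cap\inter C_n$; the contours \eqref{Gamma} are triangles whose vertices $\pm l_n$ are chosen at good abscissas (Lemma \ref{BLemma2}) and whose slanted sides dodge the Hayman exceptional disks (Lemma \ref{plemma}); and, crucially, the multiplier $w_n$ is an outer function whose modulus drops to $e^{-\pi\alpha_n l_n}$ on $\R\setminus[-l_n/2,l_n/2]$ -- it decays in the \emph{real} direction, not merely in the imaginary one -- with $\alpha_n=1000\,\varepsilon(l_n/10)$ matched to the Hayman bound so that $|w_n(\zeta)/B(\zeta)|\le 1$ uniformly on the slanted sides. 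Your multiplier $e^{i\zeta/n}$ cannot be matched in this way because its decay rate in the imaginary direction is forced to go to $0$ as $n\to\infty$ (it must tend to $1$ pointwise), whereas the lower bound for $1/|B|$ near $\R$ involves the fixed constant $C_B$. A secondary, patchable issue: your final Vitali step needs local boundedness of the partial sums near $\R$, which your estimates (uniform only on compacts of $\C\setminus\R$) do not provide; one could repair this by Phragm\'en--Lindel\"of in a horizontal strip, but only after the main identity is established.
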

The conditions \eqref{intG} are weaker than the Muckenhoupt condition \eqref{M}.
 But they also imply that $\mathcal{E}(\Lambda)$ is complete and minimal  in $L^2(-\pi,\pi)$.

We give two explicit constructions of the corresponding summation methods.  The first one  is adjusted
to concrete choice of $\Lambda$ and reflects "how far" is $\Lambda$ from condition $(C)$. This method is a development of ideas in \cite{Vas}. In order to implement this method we introduced {\it weighted model spaces} which may be of independent interest.

The second method stems from Abel--Poisson and Ces\'{a}ro methods.  It also can be used for the proof of Theorem \ref{pwise}. The construction of this method is universal for all exponential systems under consideration.

{\bf Structure of the article.} 
Theorems \ref{main}-\ref{main2} are proved in Section \ref{s4}. The preliminary facts for  are given in Section \ref{s3}. The universal summation method for the series \eqref{mainS}, \eqref{interp} is given in Section \ref{s2}.

Given positive quantities $U(x)$, $V(x)$, the notation $U(x)\lesssim V(x)$ (or, equivalently,
$V(x)\gtrsim U(x)$) means that there is a constant $C$ such that
$U(x)\leq CV(x)$ holds for all $x$ in the set in question. We write $U(x)\simeq V(x)$ if both $U(x)\lesssim V(x)$ and
$V(x)\lesssim U(x)$. 


\section{Weighted model subspaces \label{s3}}

Assume that $\Lambda\subset\mathbb{C}_+$. Consider the Blaschke products
\begin{equation}
B(z)=\prod_{\la\in \La} \frac {\bar{\la}}\la \frac {z-\la}{z-\bar{\la}}, \ 
   B_n(z)=\prod_{\la\in \La, |\la|<n} \frac {\bar{\la}}\la \frac {z-\la}{z-\bar{\la}}, \ \beta_n(z)= B(z)/B_n(z).
\label{Bneq}
\end{equation}
\begin{theorem} Let $\Lambda\subset\mathbb{C}_+$ and the generating function $G$ be such that $|G(x)|^2\in(A_2)$. Then 
$$\sum_{\lambda}\beta_n(\lambda)(f,\phi_\lambda)e^{i\lambda t} \xrightarrow{L^2(-\pi,\pi)}f,\text{ as } n\rightarrow\infty, \text { for any } f\in L^2(-\pi,\pi),$$
$$\sum_{\lambda}\beta_n(\lambda)F(\lambda)\frac{G(z)}{G'(\lambda)(z-\lambda)} \xrightarrow{\pw}F,\text{ as } n\rightarrow\infty, \text { for any } F\in \pw.$$
\label{prth}
\end{theorem}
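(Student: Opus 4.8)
The plan is to prove the second (Paley--Wiener) assertion; the first then follows immediately, since the Fourier transform $\mathcal{F}$ is unitary from $L^2(-\pi,\pi)$ onto $\pw$ and intertwines the two summation operators. Write
$$ S_nF(z):=\sum_{\lambda}\beta_n(\lambda)F(\lambda)\frac{G(z)}{G'(\lambda)(z-\lambda)}. $$
Since $\beta_n(\lambda)=0$ for $|\lambda|\ge n$, each $S_n$ is a finite-rank operator, and $\beta_n\to1$ locally uniformly because $B$ is a convergent Blaschke product (the zeros $\Lambda$ of the exponential-type function $G$ satisfy the Blaschke condition in $\Cp$ under the present hypotheses). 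As for any summation method, the proof splits into (a) a uniform bound $\sup_n\|S_n\|_{\pw\to\pw}<\infty$ and (b) convergence on a dense set, after which a standard $\varepsilon/3$ argument concludes.

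Step (b) is the easy half and I would carry it out in $L^2(-\pi,\pi)$. On the dense set (dense by completeness of $\mathcal{E}(\Lambda)$) of finite linear combinations $f=\sum_{\mu}c_\mu e^{i\mu t}$, biorthogonality $(e^{i\mu t},\phi_\lambda)=\delta_{\lambda\mu}$ gives $(f,\phi_\lambda)=c_\lambda$, whence $\sum_\lambda\beta_n(\lambda)(f,\phi_\lambda)e^{i\lambda t}=\sum_\mu c_\mu\beta_n(\mu)e^{i\mu t}\to f$ because $\beta_n(\mu)\to1$. Thus the entire difficulty is concentrated in the uniform bound (a), which I establish in $\pw$ and transfer back by unitarity of $\mathcal{F}$.

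For (a) the crucial device is an integral representation of $S_n$. Integrating the meromorphic function $\zeta\mapsto \dfrac{F(\zeta)\beta_n(\zeta)}{G(\zeta)}\cdot\dfrac{G(z)}{z-\zeta}$ along $\mathbb{R}$ and closing the contour in $\Cp$, the residues at the zeros $\lambda\in\Lambda$ of $G$ reproduce exactly the terms of $S_nF$ (the tail $|\lambda|\ge n$ disappears, as $\beta_n(\lambda)=0$), while the residue at $\zeta=z$ contributes $\beta_n(z)F(z)$. This gives, for $z\in\Cp$,
$$ S_nF(z)=\beta_n(z)F(z)-G(z)\,\bigl(P_+[\beta_n F/G]\bigr)(z), $$
where $P_+$ is the Riesz projection (the Cauchy transform onto $H^2(\Cp)$). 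Measuring now in the $\pw$ norm, i.e.\ in $L^2(\mathbb{R})$: since $|\beta_n|\equiv1$ on $\mathbb{R}$, the first term has norm $\|F\|_{\pw}$, while for the second $\|G\cdot P_+(\beta_nF/G)\|_{L^2(\mathbb{R})}=\|P_+(\beta_nF/G)\|_{L^2(|G|^2\,dx)}$. Here the Muckenhoupt hypothesis enters decisively: by the Hunt--Muckenhoupt--Wheeden theorem the condition $|G(x)|^2\in(A_2)$ is precisely what makes $P_+$ bounded on $L^2(|G|^2\,dx)$, so this term is $\lesssim\|\beta_nF/G\|_{L^2(|G|^2)}=\|F\|_{\pw}$, with a constant depending only on the $(A_2)$ characteristic of $|G|^2$ and hence independent of $n$. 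This yields $\sup_n\|S_n\|\le1+C<\infty$.

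I expect the main obstacle to be the rigorous justification of this representation rather than the norm estimate. One must verify that the arc at infinity in $\Cp$ contributes nothing (using $|\beta_n|\le1$ in $\Cp$ together with the exponential-type bounds on $F$ and $G$), justify passing from $z\in\Cp$ to the boundary values on $\mathbb{R}$ entering the norm, and keep track of the fact that $\beta_nF/G$ lies in $L^2(|G|^2\,dx)$ but in general not in $L^2(dx)$, so $P_+$ must be interpreted throughout in the weighted space. Once the representation is secured, the uniform bound --- and with it the whole theorem --- rests on the single analytic input that the Riesz projection is bounded on $L^2(|G|^2\,dx)$.
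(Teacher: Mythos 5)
Your key identity
\begin{equation*}
S_nF=\beta_n F-G\,P_+\bigl[\beta_n F/G\bigr],
\end{equation*}
with $P_+$ the Riesz projection acting on $L^2(\R,|G|^2dx)$, is true, and the way you exploit it is in substance the paper's own mechanism: writing $e^{i\pi z}G=c\,\omega B$ as in \eqref{eq:30} and $\Phi=Fe^{i\pi z}/\omega$, one has $\beta_nF/G=c^{-1}B_n^{\#}\Phi$ on $\R$, so your right-hand side coincides with $\omega e^{-i\pi z}\beta_n P_{+,n}\Phi$, where $P_{+,n}=I-B_n\mathcal P_+B_n^{\#}$ are the skew projections onto the weighted model subspaces $K_{+,n}(\omega)$ of Sections \ref{s3}--\ref{s4}; the Hunt--Muckenhoupt--Wheeden theorem is exactly what makes these uniformly bounded there (Lemma \ref{le:04}). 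The genuine gap is the one you flag yourself, and it cannot be closed with the tools you name. On the arc $|\zeta|=R$, since $\beta_n/B=1/B_n$ with $B_n$ a finite Blaschke product, the quantity to control is $|F(\zeta)e^{i\pi\zeta}|/\bigl(|\omega(\zeta)||B_n(\zeta)|\bigr)$. Exponential-type bounds give \emph{upper} bounds for $|G|$, never lower ones; near the real axis $|F(\zeta)e^{i\pi\zeta}|$ has only the $H^2$ bound $\lesssim(\Im\zeta)^{-1/2}$, and $1/|\omega(\zeta)|$ has no pointwise bound at all: $|G|$ on $\R$ is merely an $(A_2)$ weight, it may be very small on long intervals, and then $1/|\omega|$ is exponentially large on the adjacent parts of the arc. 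So ``the arc contributes nothing'' is not provable from $|\beta_n|\le1$ plus type bounds; this is precisely the difficulty that forces the paper, in its second (universal) method, to use Hayman-type lower bounds (Lemma \ref{plemma}), the special contours \eqref{Gamma}, and weights $w_n$ decaying exponentially off $[-l_n/2,l_n/2]$ --- none of which are available for your unimodular weights $\beta_n$.

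The repair needs no contours, and then your proof closes. Either argue as the paper does: for fixed $n$ the function $u=\beta_nF/G\in L^2(|G|^2dx)$ splits as $u=u_-+u_+$, where $u_-=S_nF/G$ is a finite combination of Cauchy kernels $1/(z-\la)$, $\la\in\Cp$, hence lies in $\mathfrak H_{\omega}^{-}$, while $u_+=(\beta_nF-S_nF)/G\in\mathfrak H_{\omega}^{+}$ because $\beta_nF-S_nF$ vanishes on all of $\La$ (on $\La_n$ by the interpolation property of $S_nF$, off $\La_n$ because $\beta_n$ and $G$ both vanish there), so that $\omega u_+=(\beta_nF-S_nF)e^{i\pi z}/(cB)$ is analytic in $\Cp$, of Smirnov class, with $L^2(\R)$ boundary data, hence in $H^2(\Cp)$; uniqueness of the decomposition (again the $(A_2)$ condition) then yields $u_+=P_+u$, which is your identity. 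Alternatively, verify the identity on finite combinations of the biorthogonal functions $G_\mu$, where it is a one-line computation since $P_+\bigl[\beta_nG_\mu/G\bigr]=\frac{1}{G'(\mu)}\cdot\frac{\beta_n-\beta_n(\mu)}{z-\mu}$, and invoke boundedness of both sides for fixed $n$ together with completeness of $\{G_\mu\}$ (Young's theorem, quoted in the paper). Finally, one small logical slip: $\mathcal F$ does not intertwine the two operators of the theorem; the $L^2(-\pi,\pi)$ operator corresponds to $T_n:F\mapsto\sum_\la\beta_n(\la)(F,G_\la)k_\la$, which is the adjoint of $S_n$, not $S_n$ itself. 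This costs nothing ($\|T_n\|=\|S_n\|$), but each assertion then needs its own dense set: your step (b) settles the first, and for the second you should add $S_nG_\mu=\beta_n(\mu)G_\mu\to G_\mu$ together with Young's completeness of the biorthogonal family.
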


In this section we introduce the  weighted model subspaces, they will be used for  proving  this theorem.
\subsection{Definition of subspaces}
Given an outer function $\omega(z)$ in $\Cp$,    consider the weighted spaces $L^2(\mathbb{R},|\omega|^{\pm2})$ with the norms
$$
\|f\|^2_\omega  = \int_{-\infty}^\infty |f(x)|^2 |\omega(x)|^2 dx;  \quad  \mbox{and}  \quad
     \|f\|^2_{\omega^{-1}}  = \int_{-\infty}^\infty |f(x)|^2 |\omega(x)|^{-2 }dx,
$$
and the {\it weighted} Hardy spaces
$$
\mathfrak H _\omega^+ = \frac 1 \omega H^2(\mathbb{C}_+), \  \mathfrak H_ {\omega^{-1}}^+ =  \omega H^2(\mathbb{C}_+),\     
\mathfrak H _\omega^- = \frac 1 {\omega^\#} H^2(\mathbb{C}_-), \  \mathfrak H_ {\omega^{-1}}^- =  \omega^\# H^2(\mathbb{C}_-).
$$
Here $H^2(\mathbb{C}_\pm)$ denote the classical Hardy spaces in $\Cpm$ respectively and,
  given a function $f(z)$, $z\in \Cp$ we set
$$
f^\#(z)= \overline{f(\bar {z})}, \ z \in \Cm.
$$ 
The spaces $\mathfrak H _\omega^\pm$ (respectively $\mathfrak H_ {\omega^{-1}}^\pm$) are endowed with the norms $\|\cdot\|_\omega$
 (respectively $\|\cdot\|_{\omega^{-1}}$). 
In what follows we do not distinguish functions holomorphic in $ \mathbb{C}_+$ or $ \mathbb{C}_-$ and their boundary values on $\mathbb{R}$. The projectors
$\mathcal P_{\pm}:L^2(\mathbb{R})\to H^2(\mathbb{C}_{\pm})$ have the form
$$
\mathcal{P}_{\pm}:f\mapsto \pm\frac{1}{2}f(x)+\frac{1}{\pi i}\int_{\mathbb{R}}\frac{f(\zeta)}{x-\zeta}d\zeta.
$$
If, in addition, $|\omega|^2\in(A_2)$ they are bounded in the spaces $L^2(\mathbb{R},|\omega|^{\pm2})$ and yield expansions of these spaces into direct sums 
$$
L^2(\mathbb{R},|\omega|^2)= \mathfrak H _\omega^+\dotplus\mathfrak H _\omega^-\quad\text{ and }\quad
L^2(\mathbb{R},|\omega|^{-2})= \mathfrak H _{\omega^{-1}}^+\dotplus\mathfrak H _{\omega^{-1}}^-.
$$
Next, the spaces $\mathfrak H _\omega^+,\mathfrak H_{\omega^{-1}}^-$; $\mathfrak H_{\omega^{-1}}^+,\mathfrak H_{\omega}^-$ are mutually conjugated with respect to the coupling

\begin{equation}
\lng f_+, f_-\rng = \int_{-\infty}^\infty f_+(\xi) {f_-(\xi)}d\xi, \quad f_\pm \in L^2(\R, |\omega|^{\pm2}).
\label{coupling}
\end{equation}
i.e. $\left (\mathfrak H _\omega^+\right )^*=\mathfrak H_ {\omega^{-1}}^- $ etc.

\medskip

We will also use the subspaces
\[
K_+(\omega)=\closspan_\omega \left \{ \frac{B(z)}{z-\la} \right \}_{\la\in \La}, \
     M_+(\omega)=B\mathfrak H^+_\omega, \ L_+(\omega)=\closspan_\omega \left \{ \frac 1 {z-\bar{\la}} \right \}_{\la\in \La}.
\]
\[
K_+(\omega^{-1})=\closspan_{\omega^{-1}} \left \{ \frac{B(z)}{z-{\la}} \right \}_{\la\in \La}, \
     M_+(\omega^{-1})=B\mathfrak H^+_{\omega^{-1}}, \ L_+(\omega^{-1})=\closspan_{\omega^{-1}} \left \{ \frac 1 {z-{\la}} \right \}_{\la\in \La}.
\]
\[
K_-(\omega)=\closspan_\omega \left \{ \frac{B^\#(z)}{z-\bar{\la}} \right \}_{\la\in \La}, \
     M_-(\omega)=B^\#\mathfrak H^-_\omega, \ L_-(\omega)=\closspan_\omega \left \{ \frac 1 {z-{\la}} \right \}_{\la\in \La}.
\]
\[
K_+(\omega^{-1})=\closspan_{\omega^{-1}} \left \{ \frac{B^\#(z)}{z-\bar{\la}} \right \}_{\la\in \La}, \
     M_-(\omega^{-1})=B^\#\mathfrak H^-_{\omega^{-1}}, \ L_-(\omega^{-1})=\closspan_{\omega^{-1}} \left \{ \frac 1 {z-{\la}} \right \}_{\la\in \La}.
\]
Here $\closspan_\omega$ and $\closspan_{\omega^{-1}}$ stay for the closure of the linear span  in the   norms $\|\cdot\|_\omega$ and $\|\cdot\|_{\omega^{-1}}$ respectively.


\subsection{Weighted model spaces}
 \begin{lemma}
\label{le:01}
\begin{equation}
 L_+(\omega)=  M_-(\omega^{-1})^\perp\cap\mathfrak H _\omega^+, \ L_+(\omega^{-1})= M_-(\omega)^\perp\cap\mathfrak H _\omega^+, 
\label{Leq}
\end{equation}
\begin{equation}
 L_-(\omega)=M_+(\omega^{-1})^\perp\cap\mathfrak H _\omega^-, \  L_-(\omega^{-1})=M_+(\omega)^\perp\cap\mathfrak H _\omega^-,
\label{Leq2}
\end{equation}
here orthogonality is considered with respect to coupling  \eqref{coupling}.
\end{lemma}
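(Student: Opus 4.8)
The plan is to reduce all four identities to a single reproducing-kernel computation together with the duality $(\mathfrak H_\omega^+)^* = \mathfrak H_{\omega^{-1}}^-$ already recorded above, and then to invoke the bipolar theorem. I will carry out the first identity in \eqref{Leq}, namely $L_+(\omega) = M_-(\omega^{-1})^\perp \cap \mathfrak H_\omega^+$; the remaining three are obtained by the symmetric substitutions $\omega \leftrightarrow \omega^{-1}$, $\mathbb{C}_+ \leftrightarrow \mathbb{C}_-$, $B \leftrightarrow B^\#$. Write $h_\lambda(z) = 1/(z-\bar\lambda)$ for the generators of $L_+(\omega)$. Since $L_+(\omega)$ is by definition $\|\cdot\|_\omega$-closed and $M_-(\omega^{-1})^\perp \cap \mathfrak H_\omega^+$ is closed as well, it suffices, by the bipolar theorem applied to the dual pair $(\mathfrak H_\omega^+, \mathfrak H_{\omega^{-1}}^-)$, to identify the annihilator: I will show that $L_+(\omega)^\perp = M_-(\omega^{-1})$ inside $\mathfrak H_{\omega^{-1}}^-$, whence ${}^\perp\bigl(L_+(\omega)^\perp\bigr) = L_+(\omega)$ delivers precisely the asserted equality.

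The computational core is the residue formula
\[
\lng h_\lambda, g\rng = \int_\mathbb{R}\frac{g(x)}{x-\bar\lambda}\,dx = -2\pi i\, g(\bar\lambda), \qquad g \in \mathfrak H_{\omega^{-1}}^-.
\]
Here $g = \omega^\# u$ with $u \in H^2(\mathbb{C}_-)$, so $g$ is of Smirnov class in $\mathbb{C}_-$; the integrand $g(x)/(x-\bar\lambda)$ is genuinely in $L^1(\mathbb{R})$ because, by Cauchy--Schwarz, $\int |\omega u|/|x-\bar\lambda| \le (\int |\omega|^2/|x-\bar\lambda|^2)^{1/2}\|u\|_{H^2}$, and the first factor is finite thanks to the integrability $\int |\omega(x)|^2/(1+x^2)\,dx < \infty$ guaranteed by the standing hypotheses \eqref{intG}. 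Closing the contour in $\mathbb{C}_-$, where $g$ is holomorphic while $z \mapsto 1/(z-\bar\lambda)$ has its single simple pole at $\bar\lambda$, and discarding the semicircular arc by the usual Smirnov-class Cauchy-formula argument, yields the stated value. Simultaneously $h_\lambda \in \mathfrak H_\omega^+$, since $\omega h_\lambda = \omega/(z-\bar\lambda)$ is holomorphic in $\mathbb{C}_+$ with $L^2(\mathbb{R})$ boundary values, hence lies in $H^2(\mathbb{C}_+)$; consequently $L_+(\omega) \subseteq \mathfrak H_\omega^+$.

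Granting the residue formula, both inclusions reduce to one-line verifications. For $g \in M_-(\omega^{-1}) = B^\#\mathfrak H_{\omega^{-1}}^-$ we have $g(\bar\lambda) = 0$ because $B^\#(\bar\lambda) = \overline{B(\lambda)} = 0$, so $\lng h_\lambda, g\rng = 0$ and $M_-(\omega^{-1}) \subseteq L_+(\omega)^\perp$. Conversely, let $g = \omega^\# u \in \mathfrak H_{\omega^{-1}}^-$ annihilate every $h_\lambda$. Then $0 = \lng h_\lambda, g\rng = -2\pi i\,\omega^\#(\bar\lambda)\,u(\bar\lambda)$, and since $\omega$ is outer we have $\omega^\#(\bar\lambda) = \overline{\omega(\lambda)} \neq 0$; hence $u(\bar\lambda) = 0$ for every $\lambda \in \Lambda$. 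As $u \in H^2(\mathbb{C}_-)$ vanishes, with the correct multiplicities, on the zero set $\bar\Lambda$ of the inner function $B^\#$, the quotient $v := u/B^\#$ is again in $H^2(\mathbb{C}_-)$, and therefore $g = \omega^\# B^\# v = B^\#(\omega^\# v) \in B^\# \mathfrak H_{\omega^{-1}}^- = M_-(\omega^{-1})$. This proves $L_+(\omega)^\perp = M_-(\omega^{-1})$, and the bipolar theorem closes the argument.

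I expect the genuine difficulty to lie in the rigorous justification of the residue formula in the weighted setting: one must confirm that $g = \omega^\# u$ belongs to a Nevanlinna/Smirnov class in $\mathbb{C}_-$ for which the Cauchy integral reproduces point values, and that the arc at infinity contributes nothing, using only the integrability/$(A_2)$ information about the general outer weight $\omega$ rather than any boundedness. The accompanying division step---that $u(\bar\lambda)=0$ for all $\lambda$ forces divisibility by $B^\#$ within $H^2(\mathbb{C}_-)$---is standard but presupposes that the zeros of $B^\#$ are simple, matching the simple zeros of the generating function. Once these analytic points are settled, the remaining three identities in \eqref{Leq}--\eqref{Leq2} follow verbatim after interchanging $\omega$ with $\omega^{-1}$ and the roles of the two half-planes, the orthogonality in each case again reducing to the vanishing of $B$ or $B^\#$ at the appropriate conjugate nodes.
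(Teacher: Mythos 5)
Your proof is correct, but it is organized dually to the paper's, and the comparison is worth recording. The paper argues on the primal side: after the same easy inclusion (the reproducing--kernel property of the Cauchy kernel), it takes $f\in M_-(\omega^{-1})^\perp\cap\mathfrak H_\omega^+$ and runs the chain $f\perp B^\#\mathfrak H_{\omega^{-1}}^-\Leftrightarrow B^\# f\perp \mathfrak H_{\omega^{-1}}^-\Leftrightarrow B^\# f\in\mathfrak H_\omega^-$, concludes that $f$ extends to $\mathbb{C}_-$ meromorphically with poles in $\overline{\Lambda}$, and then asserts that this "yields $f\in L_+(\omega)$". You instead compute the annihilator of $L_+(\omega)$ inside the dual $\mathfrak H_{\omega^{-1}}^-$ (residue formula $\lng h_\lambda,g\rng=-2\pi i\,g(\bar\lambda)$, then division by $B^\#$ in $H^2(\mathbb{C}_-)$), and close with the bipolar theorem. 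Both routes rest on the same three ingredients --- the Cauchy/residue computation, Blaschke division, and the $(A_2)$-dependent duality $(\mathfrak H_\omega^+)^*=\mathfrak H_{\omega^{-1}}^-$ --- but your dual-side organization buys something concrete: the paper's final implication (meromorphic continuation with poles in $\overline\Lambda$ plus membership in $\mathfrak H_\omega^+$ forces membership in the \emph{closed span} of the Cauchy kernels) is exactly the nontrivial point, and it is left unproved there; your annihilator computation plus bipolar is precisely the Hahn--Banach argument that justifies it. Indeed, your mechanism ($g\perp$ all kernels $\Rightarrow g|_{\bar\Lambda}=0\Rightarrow g=B^\#g_1$) is the one the paper itself uses later in the proof of Lemma \ref{le:04}. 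What the paper's shorter chain buys in exchange is that, read as a statement about $B\mathfrak H_\omega^-\cap\mathfrak H_\omega^+$, it simultaneously delivers Lemma \ref{le:02}.

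One mis-attribution, not a gap: the integrability $\int_{\mathbb{R}}|\omega(x)|^2(1+x^2)^{-1}dx<\infty$, which you need both for the residue formula and for $h_\lambda\in\mathfrak H_\omega^+$, is not "guaranteed by \eqref{intG}" --- that display is a hypothesis on $G$ in Theorem \ref{pwise}, not a standing assumption of Section \ref{s3}. It does, however, follow from $|\omega|^2\in(A_2)$ (by reverse H\"older, $|\omega|^2\in (A_{2-\eps})$, whence the integral converges), and $(A_2)$ is exactly the hypothesis under which the duality you invoke is valid; so your argument stands once this is cited correctly. You are also right to flag simplicity of the zeros of $B$: the paper's framework (distinct $\lambda$, simple zeros of $G$) ensures this, and the symmetric substitutions $\omega\leftrightarrow\omega^{-1}$, $\mathbb{C}_+\leftrightarrow\mathbb{C}_-$, $B\leftrightarrow B^\#$ dispose of the remaining three identities exactly as the paper's "proved similarly" does (note that in those identities the ambient spaces should be read as $\mathfrak H_{\omega^{-1}}^{\pm}$ where the weight is $\omega^{-1}$; the statement as printed carries obvious typos).
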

Indeed, the fact that $L_+(\omega)\subset M_-(\omega^{-1})^{\perp}$ follows just from the reproducing kernel property of the Cauchy kernel. On the other hand
$$f\in  M_-(\omega^{-1})^{\perp}\Leftrightarrow f\perp B^\#\mathfrak H_{\omega^{-1}}^-\Leftrightarrow B^\#f\perp\mathfrak H_{\omega^{-1}}^-\Leftrightarrow B^\#f\in \mathfrak H_{\omega}^-.$$
Therefore $f$ can be extended in $\mathbb{C}_-$ as a meromorphic function with poles in $\overline{\Lambda}$ which yields $f\in L_+(\omega)$. Thus we have $L_+(\omega)= M_-(\omega^{-1})^{\perp}$. The rest of relations \eqref{Leq}-\eqref{Leq2} can be proved similarly.

\begin{lemma}
\label{le:02}
\[
 L_+(\omega)=  B \mathfrak H _\omega^- \cap \mathfrak H _\omega^+, \  
 L_+(\omega^{-1})=  B \mathfrak H _{\omega^{-1}}^- \cap \mathfrak H _{\omega^{-1}}^+ , 
 \]
\[
L_-(\omega)=B^\# \mathfrak H_\omega^+ \cap \mathfrak H _\omega^- , \  
L_-(\omega^{-1})=B^\# \mathfrak H _{\omega^{-1}}^+ \cap \mathfrak H_{\omega^{-1}}^-.
\]
\end{lemma}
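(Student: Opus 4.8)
The plan is to derive each of the four identities directly from the corresponding line of Lemma \ref{le:01} by replacing the abstract annihilators $M_\pm(\omega^{\mp1})^\perp$ with explicit $B$- or $B^\#$-shifted weighted Hardy spaces. The only two facts needed beyond Lemma \ref{le:01} are: (i) on the real line $B$ and $B^\#$ are unimodular with $B^\# = \bar B = 1/B$, so that multiplication by $B$ or $B^\#$ is an isometry of $L^2(\R,|\omega|^2)$ and of $L^2(\R,|\omega|^{-2})$ onto themselves; and (ii) the conjugacy relations recorded before \eqref{coupling}, namely that under the coupling \eqref{coupling} the space $\mathfrak H_\omega^+$ is paired with $\mathfrak H_{\omega^{-1}}^-$ and $\mathfrak H_\omega^-$ with $\mathfrak H_{\omega^{-1}}^+$. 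Since the $(A_2)$ hypothesis makes the Riesz projectors $\mathcal P_\pm$ bounded and yields the topological direct sums $L^2(\R,|\omega|^{\pm2}) = \mathfrak H^+ \dotplus \mathfrak H^-$, fact (ii) says precisely that the annihilator of $\mathfrak H_{\omega^{-1}}^-$ inside $L^2(\R,|\omega|^2)$ is $\mathfrak H_\omega^-$, and similarly for the other three half-spaces.

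I would carry out the first identity in detail and leave the rest to symmetry. Starting from $L_+(\omega) = M_-(\omega^{-1})^\perp \cap \mathfrak H_\omega^+$ of \eqref{Leq} together with $M_-(\omega^{-1}) = B^\#\mathfrak H_{\omega^{-1}}^-$, I would factor out the inner factor: for $f \in L^2(\R,|\omega|^2)$ and $h \in \mathfrak H_{\omega^{-1}}^-$ the coupling satisfies $\lng f, B^\# h\rng = \lng B^\# f, h\rng$, with $B^\# f$ again in $L^2(\R,|\omega|^2)$ by (i). Hence $f \perp M_-(\omega^{-1})$ if and only if $B^\# f \perp \mathfrak H_{\omega^{-1}}^-$, which by (ii) means $B^\# f \in \mathfrak H_\omega^-$, i.e. $f \in B\mathfrak H_\omega^-$ (using $1/B^\# = B$ on $\R$). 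Thus $M_-(\omega^{-1})^\perp = B\mathfrak H_\omega^-$, and intersecting with $\mathfrak H_\omega^+$ gives $L_+(\omega) = B\mathfrak H_\omega^- \cap \mathfrak H_\omega^+$.

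The remaining three equalities follow by the identical computation after the appropriate swaps of weight ($\omega \leftrightarrow \omega^{-1}$) and half-plane ($+ \leftrightarrow -$, equivalently $B \leftrightarrow B^\#$): from $M_-(\omega) = B^\#\mathfrak H_\omega^-$ one gets $M_-(\omega)^\perp = B\mathfrak H_{\omega^{-1}}^-$, whence $L_+(\omega^{-1}) = B\mathfrak H_{\omega^{-1}}^- \cap \mathfrak H_{\omega^{-1}}^+$; from $M_+(\omega^{-1}) = B\mathfrak H_{\omega^{-1}}^+$ one gets $M_+(\omega^{-1})^\perp = B^\#\mathfrak H_\omega^+$, whence $L_-(\omega) = B^\#\mathfrak H_\omega^+ \cap \mathfrak H_\omega^-$; and from $M_+(\omega) = B\mathfrak H_\omega^+$ one gets $M_+(\omega)^\perp = B^\#\mathfrak H_{\omega^{-1}}^+$, whence $L_-(\omega^{-1}) = B^\#\mathfrak H_{\omega^{-1}}^+ \cap \mathfrak H_{\omega^{-1}}^-$. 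In each case one invokes the matching line of \eqref{Leq}--\eqref{Leq2}.

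I expect the computation itself to be routine; the one point deserving care — and the main potential obstacle — is the rigorous justification of step (ii), i.e. that the annihilator of a weighted Hardy half-space is exactly the same-sign weighted Hardy half-space in the conjugate weight. This rests on the $(A_2)$ assumption through the boundedness of $\mathcal P_\pm$ and the resulting genuine direct-sum decompositions; without it the identity ``annihilator $=$ complementary summand'' could fail, and one would also have to check that $B\mathfrak H_\omega^-$ is a genuine closed subspace of $L^2(\R,|\omega|^2)$ rather than merely a set of boundary values. Both are guaranteed once $|\omega|^2 \in (A_2)$, so the argument closes.
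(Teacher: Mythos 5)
Your proposal is correct and takes essentially the same route as the paper's own proof: both reduce the lemma to Lemma \ref{le:01}, pull the unimodular factor $B^\#$ across the coupling \eqref{coupling}, and use the duality between $\mathfrak H_{\omega}^{\pm}$ and $\mathfrak H_{\omega^{-1}}^{\mp}$ (your fact (ii), guaranteed by $(A_2)$) to convert ``$B^\# f \perp \mathfrak H_{\omega^{-1}}^-$'' into ``$B^\# f \in \mathfrak H_\omega^-$'', hence $f \in B\mathfrak H_\omega^-$. The paper writes exactly this chain of equivalences for the first identity and leaves the remaining three to symmetry, just as you do; your write-up merely makes explicit the annihilator identification that the paper treats as understood.
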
 

\begin{proof}  We restrict ourselves just to the first of these relations. Clearly 
$L_+(\omega)\subset \mathfrak H _\omega^+$. We also have for $f\in\mathfrak H _\omega^+$
\[
f\in L_+(\omega) \Leftrightarrow f \perp B^\#  \mathfrak H_{\omega^{-1}}^- \Leftrightarrow 
         B^\#   f \perp  \mathfrak H_{\omega^{-1}}^-  \Leftrightarrow B^\#   f \in \mathfrak H _\omega^-
 \Leftrightarrow    f \in  B \mathfrak H _\omega^-.
\]
\end{proof}

\begin{lemma}
$$
 K_+(\omega^{-1})=L_+(\omega^{-1}), \  K_+(\omega)=L_+(\omega), \   K_-(\omega)=L_-(\omega), \ K_-(\omega^{-1})=L_-(\omega^{-1}).
$$
\end{lemma}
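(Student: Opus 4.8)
The plan is to prove only $K_+(\omega)=L_+(\omega)$ in detail; the remaining three identities follow from it by the symmetric substitutions $\omega\leftrightarrow\omega^{-1}$, $B\leftrightarrow B^\#$, $\la\leftrightarrow\bar\la$, $+\leftrightarrow-$, invoking the matching relations of Lemmas~\ref{le:01} and~\ref{le:02}. The starting point is that, since $|B(x)|=1$ for $x\in\mathbb R$, multiplication by $B$ is a unitary operator on $L^2(\mathbb R,|\omega|^2)$; it carries the generator $1/(z-\la)$ of $L_-(\omega)$ to the generator $B(z)/(z-\la)$ of $K_+(\omega)$, and a unitary maps closed linear spans onto closed linear spans, so $K_+(\omega)=B\,L_-(\omega)$. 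By Lemma~\ref{le:01}, $L_-(\omega)=M_+(\omega^{-1})^\perp\cap\mathfrak H_\omega^-$, whence $f\in K_+(\omega)$ precisely when $f=B\psi$ with $\psi\in\mathfrak H_\omega^-$ and $\psi\perp M_+(\omega^{-1})$. By Lemma~\ref{le:02}, $L_+(\omega)=B\mathfrak H_\omega^-\cap\mathfrak H_\omega^+$, i.e.\ $f\in L_+(\omega)$ precisely when $f=B\psi$ with $\psi\in\mathfrak H_\omega^-$ and $B\psi\in\mathfrak H_\omega^+$. Hence the whole statement collapses to the single equivalence, valid for every $\psi\in\mathfrak H_\omega^-$: one has $B\psi\in\mathfrak H_\omega^+$ if and only if $\psi\perp M_+(\omega^{-1})$.

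The heart of the matter is the vanishing of the bilinear coupling~\eqref{coupling} between two functions of the same (here ``plus'') type: if $u\in\mathfrak H_\omega^+=\omega^{-1}H^2(\Cp)$ and $h\in\mathfrak H_{\omega^{-1}}^+=\omega H^2(\Cp)$, then $uh\in H^1(\Cp)$ and therefore $\langle u,h\rangle=\int_{\mathbb R}u(x)h(x)\,dx=0$. Granting this, the forward implication is immediate: for $h\in\mathfrak H_{\omega^{-1}}^+$ one has $\langle\psi,Bh\rangle=\langle B\psi,h\rangle=0$ because $B\psi\in\mathfrak H_\omega^+$, and $Bh$ runs over $M_+(\omega^{-1})=B\mathfrak H_{\omega^{-1}}^+$, so $\psi\perp M_+(\omega^{-1})$. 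For the converse, decompose $B\psi=f_++f_-$ along $L^2(\mathbb R,|\omega|^2)=\mathfrak H_\omega^+\dotplus\mathfrak H_\omega^-$ (legitimate under $(A_2)$); then for every $h\in\mathfrak H_{\omega^{-1}}^+=(\mathfrak H_\omega^-)^*$ we get $\langle f_-,h\rangle=\langle B\psi,h\rangle-\langle f_+,h\rangle=0$, the first term vanishing since $\psi\perp M_+(\omega^{-1})$ and the second by the bilinear fact. Nondegeneracy of the duality forces $f_-=0$, i.e.\ $B\psi=f_+\in\mathfrak H_\omega^+$.

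This proves $K_+(\omega)=L_+(\omega)$, and the remaining equalities follow mutatis mutandis. The main obstacle is precisely the equivalence $B\psi\in\mathfrak H_\omega^+\iff\psi\perp M_+(\omega^{-1})$: it is the step that converts an annihilation condition against $B\mathfrak H_{\omega^{-1}}^+$ into a one-sided analyticity statement (membership in the ``plus'' space), and it rests entirely on the Hardy-space observation that the product of two $H^2(\Cp)$ functions lies in $H^1(\Cp)$ and hence integrates to zero over the line. Once this is in hand, the rest is bookkeeping with Lemmas~\ref{le:01}--\ref{le:02}, the direct-sum decompositions available under $(A_2)$, and the unitarity of multiplication by $B$.
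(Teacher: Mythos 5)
Your proof is correct, and its decisive first step is exactly the paper's whole strategy: since $|B(x)|=1$ on $\mathbb{R}$, multiplication by $B$ is an isometry of $L^2(\mathbb{R},|\omega|^2)$ onto itself, hence maps closed spans to closed spans and gives $K_+(\omega)=B\,L_-(\omega)$. The divergence lies in what you do next. The paper substitutes the description of $L_-(\omega)$ from Lemma~\ref{le:02}, namely $L_-(\omega)=B^\#\mathfrak H_\omega^+\cap\mathfrak H_\omega^-$, and simply cancels $BB^\#=1$ on $\mathbb{R}$:
$$B\,L_-(\omega)=B\left(B^\#\mathfrak H_\omega^+\cap\mathfrak H_\omega^-\right)=\mathfrak H_\omega^+\cap B\mathfrak H_\omega^-=L_+(\omega),$$
and the proof ends there. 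You instead substitute the description from Lemma~\ref{le:01}, $L_-(\omega)=M_+(\omega^{-1})^\perp\cap\mathfrak H_\omega^-$, which then forces you to establish the equivalence $\psi\perp M_+(\omega^{-1})\iff B\psi\in\mathfrak H_\omega^+$ for $\psi\in\mathfrak H_\omega^-$. But that equivalence is precisely the statement that the two available descriptions of $L_-(\omega)$ --- the annihilator one in Lemma~\ref{le:01} and the analytic-continuation one in Lemma~\ref{le:02} --- coincide; in other words, your ``heart of the matter'' re-proves a case of Lemma~\ref{le:02}, and by essentially the same mechanism the paper uses for that lemma: the pairing-vanishing fact $H^2(\Cp)\cdot H^2(\Cp)\subset H^1(\Cp)$ together with the $(A_2)$ direct-sum decomposition $L^2(\mathbb{R},|\omega|^2)=\mathfrak H_\omega^+\dotplus\mathfrak H_\omega^-$ and nondegeneracy of the duality $(\mathfrak H_\omega^-)^*=\mathfrak H_{\omega^{-1}}^+$. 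Nothing in your argument is wrong --- the bilinear $H^1$ fact, the forward implication, and the $f_-=0$ step all hold --- but the detour is redundant: quoting Lemma~\ref{le:02} for $L_-(\omega)$ instead of Lemma~\ref{le:01} collapses your central equivalence into the one-line cancellation above. What your version buys is self-containedness of that step (and it makes visible that the boundedness of the projections under $(A_2)$ is what powers it); the cost is duplicating work already done in Section~\ref{s3}.
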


\begin{proof} Again we restrict ourselves to the first of these relations:
\begin{align*}
K_+(\omega^{-1})=
        \closspan_{\omega^{-1}} \left \{ \frac{B(z)}{z-\la} \right \}_{\la\in \La}=
                  B(z)   \closspan_{\omega^{-1}} \left \{ \frac{1}{z-\la} \right \}_{\la\in \La}  \\=
                             B(z)  L_-(\omega^{-1})= B(z) \left (   B^\# \mathfrak H _{\omega^{-1}}^+ \cap \mathfrak H_{\omega^{-1}}^-    \right ) =   L_+(\omega^{-1})     
\end{align*}

\end{proof}

\subsection{Projection operators}

\begin{lemma}
\label{le:04}
\begin{equation}
K_+(\omega)\dotplus M_+(\omega)=\mathfrak H _\omega^+, \ 
K_-(\omega)\dotplus M_-(\omega)=\mathfrak H _\omega^-, 
\label{pr1}
\end{equation}
 \begin{equation}
    K_+(\omega^{-1})\dotplus M_+(\omega^{-1})=\mathfrak H _{\omega^{-1}}^+, 
\ K_-(\omega^{-1})\dotplus M_-(\omega^{-1})=\mathfrak H _{\omega^{-1}}^-.
\label{pr2}
\end{equation}
\end{lemma}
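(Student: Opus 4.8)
The plan is to prove the first identity in \eqref{pr1}, namely $K_+(\omega)\dotplus M_+(\omega)=\mathfrak H_\omega^+$, by exhibiting an explicit bounded idempotent on $\mathfrak H_\omega^+$ whose range is $M_+(\omega)$ and whose kernel is $K_+(\omega)$; the other three identities will then follow verbatim after interchanging $\mathbb{C}_+\leftrightarrow\mathbb{C}_-$, $B\leftrightarrow B^\#$ and $|\omega|^2\leftrightarrow|\omega|^{-2}$. The operator I would use is
$$
Pf = B\,\mathcal{P}_+\bigl(B^\# f\bigr),\qquad f\in\mathfrak H_\omega^+ .
$$
Because $|B|=|B^\#|=1$ on $\mathbb{R}$, multiplication by $B$ and by $B^\#$ are isometries of $L^2(\mathbb{R},|\omega|^2)$; since $|\omega|^2\in(A_2)$, the projector $\mathcal{P}_+$ is bounded on this space, so $P$ is bounded. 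Moreover $P$ maps $\mathfrak H_\omega^+$ into $B\mathfrak H_\omega^+=M_+(\omega)\subset\mathfrak H_\omega^+$.

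The key computation rests on $BB^\#=1$ on $\mathbb{R}$ together with the splitting $L^2(\mathbb{R},|\omega|^2)=\mathfrak H_\omega^+\dotplus\mathfrak H_\omega^-$ furnished by $(A_2)$, under which $\mathcal{P}_+$ is the identity on $\mathfrak H_\omega^+$ and annihilates $\mathfrak H_\omega^-$. Decomposing $B^\# f=\mathcal{P}_+(B^\# f)+\mathcal{P}_-(B^\# f)$ and multiplying by $B$ gives
$$
f = B\,\mathcal{P}_+(B^\# f) + B\,\mathcal{P}_-(B^\# f) = Pf + (I-P)f .
$$
Here $Pf\in M_+(\omega)$, while $(I-P)f=B\,\mathcal{P}_-(B^\# f)$ lies in $B\mathfrak H_\omega^-$ and, being a difference of two elements of $\mathfrak H_\omega^+$, also in $\mathfrak H_\omega^+$; by Lemma \ref{le:02} this intersection equals $L_+(\omega)=K_+(\omega)$. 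Testing $P$ on $g=Bh$ with $h\in\mathfrak H_\omega^+$ (which gives $Pg=g$) and on $g=Bk$ with $k\in\mathfrak H_\omega^-$ (which gives $Pg=0$) then shows $P^2=P$, that the range of $P$ is exactly $M_+(\omega)$, and that the range of $I-P$ is exactly $K_+(\omega)$.

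Directness of the sum reduces to $K_+(\omega)\cap M_+(\omega)=\{0\}$: if $Bh=Bk$ with $h\in\mathfrak H_\omega^+$ and $k\in\mathfrak H_\omega^-$, then $h=k\in\mathfrak H_\omega^+\cap\mathfrak H_\omega^-=\{0\}$, again by the $(A_2)$ splitting, so both sides vanish; boundedness of $P$ upgrades the algebraic decomposition to a genuine topological $\dotplus$. The step I expect to demand the most care is not the formal algebra but keeping every manipulation inside the weighted spaces: one must use $|B|=1$ on $\mathbb{R}$ to preserve isometry of multiplication by $B,B^\#$ on $L^2(\mathbb{R},|\omega|^{\pm2})$, and invoke the Muckenhoupt condition precisely where it is needed, both for boundedness of $\mathcal{P}_\pm$ and for the transversality $\mathfrak H_\omega^+\cap\mathfrak H_\omega^-=\{0\}$. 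For \eqref{pr2} and for the $K_-,M_-$ identities I would rerun the argument with $\mathcal{P}_-$ in place of $\mathcal{P}_+$, with $B^\#$ in place of $B$, and with the weight $|\omega|^{-2}$ (which lies in $(A_2)$ together with $|\omega|^2$).
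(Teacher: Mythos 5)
Your proof is correct, and it hinges on the same key operator as the paper's: you work with $P=B\,\mathcal{P}_+B^\#$, the paper with its complement $P_+=I-B\mathcal{P}_+B^\#$, both bounded on $\mathfrak H_\omega^+$ precisely because $|B|=1$ on $\mathbb{R}$ and $|\omega|^2\in(A_2)$. The genuine difference is how each argument shows the sum exhausts $\mathfrak H_\omega^+$. The paper first proves that $K_+(\omega)+M_+(\omega)$ is \emph{dense}, by duality: a function $g\in\mathfrak H_{\omega^{-1}}^-$ annihilating $K_+(\omega)$ with respect to the coupling \eqref{coupling} must vanish on $\overline{\Lambda}$, hence $g=B^\#g_1$ with $g_1\in\mathfrak H_{\omega^{-1}}^-$, and annihilation of $M_+(\omega)$ then forces $g_1=0$; after that, the bounded projector $P_+$ (whose mapping properties onto $K_+(\omega)$ and along $M_+(\omega)$ are asserted rather than spelled out) upgrades density to the direct-sum equality. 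You dispense with the density step altogether: the identity $f=B\mathcal{P}_+(B^\#f)+B\mathcal{P}_-(B^\#f)$ places the first summand in $M_+(\omega)$ by construction and the second in $B\mathfrak H_\omega^-\cap\mathfrak H_\omega^+$, which equals $L_+(\omega)=K_+(\omega)$ by Lemma \ref{le:02} together with the lemma that follows it, so every $f$ decomposes at once, and your range/kernel computations supply exactly the verification of the projector's mapping properties that the paper leaves implicit. What each route buys: the paper's duality computation is independent of the $K=L$ identification and echoes the annihilator technique used elsewhere in the paper, while your argument is shorter at the structural level and makes the decomposition fully explicit; both invoke the Muckenhoupt condition in the same two places (boundedness of $\mathcal{P}_\pm$ and the splitting of the weighted $L^2$ space), and your symmetry reduction for the remaining three identities, swapping $\mathbb{C}_+\leftrightarrow\mathbb{C}_-$, $B\leftrightarrow B^\#$ and $|\omega|^{\pm2}$ (with $(A_2)$ stable under $w\mapsto w^{-1}$), matches the paper's own ``we restrict ourselves to the first relation.''
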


\begin{proof}
We restrict ourselves to the first relation only. Observe that, if $g\in 
 \mathfrak H _{\omega^{-1}}^-$ and $g\perp (K_+(\omega)\cup M_+(\omega)) $ then $g=0$.
Indeed $g\perp K_+(\omega)$ $\Rightarrow$ $g|_{\bar{\La}}=0$ $\Rightarrow$ $g=B^\#g_1$, 
for some $g_1\in \mathfrak H _{\omega^{-1}}^-$. Further 
$g\perp M_+(\omega)$ $ \Rightarrow$ $\lng B^\#g_1, Bf\rng=0$ for all $f\in \mathfrak H _\omega^+$,
therefore $g_1=0$, hence 
$K_+(\omega)+ M_+(\omega)$ is dense in $\mathfrak H _\omega^+$.

In order to complete the proof it suffices to mention that the operator 
$$
P_+= I-B\mathcal P_+ B^\# : \mathfrak H _\omega^+ \to K_+(\omega)
$$
is the projector onto $K_+(\omega)$ annihilating $M_+(\omega)$. Boundedness of $P_+$ follows from 
the inclusion $|\omega|^2\in (A_2)$.
\end{proof}
\begin{remark}
Representations \eqref{pr1}-\eqref{pr2} are analogs of the classical representation $H^2=K_\Theta\oplus\Theta H^2$,
where $\Theta$ is an inner function and $K_\Theta $ is the standard model space. 
\end{remark}


\section{Projection summation method \label{s4}}

\subsection{Summation in $K_+(\omega)$ }

Given any $f\in K_+(\omega)$ we consider the interpolating series
\[
f \sim \sum_{\la\in \La } f(\la) \frac {B(z)}{ B'(\la) (z-\la)} 
\]
and look for summation method for this series.

We use the method constructed in \cite{Vas} for the classical model spaces. For each $n>0$ we define
$\Lambda_n=\{\lambda\in\Lambda, |\lambda|<n\}$ and let $B_n(z)$, $\beta_n(z)$ be defined in \eqref{Bneq}.

Consider the subspaces
\[
K_{+, n}(\omega)= \closspan_\omega \left \{ \frac 1 {z-\bar{\la}}  \right \}_{\la\in \La_n}=
     \closspan_\omega \left \{ \frac {B_n(z)}{z-\la} \right \}_{\la\in \La_n} \subset \mathfrak H _\omega^+,
\]
\[
 M_ {+,n}(\omega)= B_n  \mathfrak H _\omega^+,  \  K_{-, n}(\omega^{-1})=\closspan_{\omega^{-1}} \left \{ \frac 1 {z-\la} \right \}_{\la\in \La_n}
\]
and the corresponding projection operators $P_{+,n}= I - B_n\mathcal P_+ B^\#_n: 
\mathfrak H _\omega^+ \to K_{+, n}(\omega)$.

We have
\begin{equation}
\label{eq:20}
K_{+, n}(\omega) \nearrow K_{+}(\omega), \ P_{+,n}|_ {K_{+}(\omega) }\xrightarrow{s}  I\bigl{|}_ {K_{+}(\omega) }\text { as } n\rightarrow\infty.
\end{equation}

The biorthogonality relations
\[
\left \lng \underbrace{ \frac 1 {B'_n(\la)} \frac{B_n(z)}{z-\la}}_{\in K_{+,n}(\omega)}, 
                \underbrace{\frac 1 {z-\mu}}_{\in K_{-,n}(\omega^{-1})} \right \rng = \delta_{\la,\mu}, \ \la,\mu \in \La_n
\]
allow one to express $P_{+,n}$ as:
\[
P_{+,n}f =\sum_{\la\in \La_n } \left \lng f, \frac 1 {z-\la} \right \rng \frac {B_n(z)}{ B'_n(\la) (z-\la)} =
        \sum_{\la\in \La_n } f(\la) \frac {B_n(z)}{ B'_n(\la) (z-\la)}
\]
Relation \eqref{eq:20} now takes the form
\[
P_{+,n}f =\frac 1 {\beta_n(z)} \sum_{\la\in \La_n } f(\la) \beta_n(\la)\frac {B(z)}{ B'(\la) (z-\la)}   \to f, \  n\to \infty, 
\ f \in 
K_{+}(\omega).
\]
Since $|\beta_n(x)|=1$ and $\beta_n(x) \to 1$ as $n\to \infty$ we obtain
\[
S_{+,n}f = \sum_{\la\in \La } f(\la) \beta_n(\la)\frac {B(z)}{ B'(\la) (z-\la)}   \to f, \  n\to \infty, 
\ f \in 
K_{+}(\omega).
\]
This is the desired summation method in $K_{+}(\omega)$.


\subsection{Summation in $\pw$}

The generating function $G$ admits the natural factorization
\begin{equation}
\label{eq:30}
G(z)=\begin{cases}
     \omega(z) B(z) e^{-i\pi z}, &\text{if $ z\in \Cp$} \\
     \omega^\#(z) e^{i\pi z}, & \text{if $z\in \Cm$},
        \end{cases}
\end{equation}
here $\omega$ is an outer function in $\Cp$ and $B$ is the Blaschke product 
with $\La$ as the zero set.

Given $F\in PW_\pi$ consider the function 
\begin{equation}
\label{eq:32}
\Phi(z)=\frac{F(z)e^{i\pi z}}{\omega(z)} \in \mathfrak H _\omega^+.
\end{equation}

Then convergence/summability of  the interpolation series \eqref{interp} in $L^2(\R)$ is equivalent to 
convergence/summability  of the series
\[
\Phi(z)\sim \sum_{\la\in \La}  \Phi(\la) \frac{B(z)}{B'(\la)(z-\la)}
\]
in $\|\cdot\|_\omega$ norm. In order to apply the previous results, and thus prove Theorem  \ref{prth}, it suffices to prove that $\Phi\in  K_+(\omega)$. 

Indeed, it follows from \eqref{eq:30} that  
$$
\omega(x)=\omega^\#(x)e^{2i\pi x} B^\#(x), \quad x \in \R,
$$
therefore 
 $$ 
\Phi(x)=\frac {F(x)e^{-i \pi x}}{\omega^\#(x)B^\#(x)}, \ x\in \R.
$$
It is now straightforward that 
  $\Phi \in  M_-(\omega^{-1})^\perp\cap\mathfrak{H}^+_\omega= K_+(\omega)$.


\subsection{The general case} We briefly describe changes to be done in the case when the sequence $\Lambda$ does not belong to the upper(lower) halfplane. From condition $|G(x)|^2\in(A_2)$ we conclude that $\Lambda\cap\mathbb{R}=\emptyset$.
Denote $\Lambda^{\pm}=\Lambda\cap\mathbb{C}_{\pm}$;
$$
B^{\pm}(z)=\prod_{\la\in \La^{\pm}} \frac {\bar{\la}}\la \frac {z-\la}{z-\bar{\la}}, \ 
   B^{\pm}_n(z)=\prod_{\la\in \La^{\pm}, |\la|<n} \frac {\bar{\la}}\la \frac {z-\la}{z-\bar{\la}}, \ \beta^{\pm}_n(z)= B^{\pm}(z)/B^{\pm}_n(z).
$$
\begin{theorem} Let the generating function $G$ be such that $|G(x)|^2\in(A_2)$. Then 
$$\sum_{\lambda\in\Lambda^{+}}\beta^{+}_n(\lambda)(f,\phi_\lambda)e^{i\lambda t} +
\sum_{\lambda\in\Lambda^{-}}\beta^{-}_n(\lambda)(f,\phi_\lambda)e^{i\lambda t} 
\xrightarrow{L^2(-\pi,\pi)}f,$$
$$\text{ as } n\rightarrow\infty, \text { for any } f\in L^2(-\pi,\pi),$$
\begin{equation}
\sum_{\lambda\in\Lambda^{+}}\beta^{+}_n(\lambda)F(\lambda)\frac{G(z)}{G'(\lambda)(z-\lambda)}
+ \sum_{\lambda\in\Lambda^{-}}\beta^{-}_n(\lambda)F(\lambda)\frac{G(z)}{G'(\lambda)(z-\lambda)}\xrightarrow{\pw}F,
\label{Fpm}
\end{equation}
$$\text{ as } n\rightarrow\infty, \text { for any } F\in \pw.$$
\label{prth2}
\end{theorem}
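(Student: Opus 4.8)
The plan is to split $\La=\La^+\sqcup\La^-$ --- note that $|G(x)|^2\in(A_2)$ forces $\La\cap\R=\emptyset$ --- and to reduce each of the two sums in \eqref{Fpm} to the one half-plane summation of Section \ref{s4}, applied once in $\Cp$ with $B^+$ and once, via the reflection $f\mapsto f^\#$, in $\Cm$ with $B^-$; the only real point will be that the two partial reconstructions add up to $F$. Adapting \eqref{eq:30}, the $\Cp$-zeros of $G$ are $\La^+$ and its $\Cm$-zeros are $\La^-$, so there is a single outer function $\omega$ in $\Cp$ with $|\omega(x)|=|G(x)|$ on $\R$ and
\[
G=\omega B^+ e^{-i\pi z}\ \text{in}\ \Cp,\qquad G=\omega^\# B^- e^{i\pi z}\ \text{in}\ \Cm .
\]
As in \eqref{eq:32} I attach to $F\in\pw$ the companions $\Phi=Fe^{i\pi z}/\omega\in\mathfrak H_\omega^+$ and $\Psi=Fe^{-i\pi z}/\omega^\#\in\mathfrak H_\omega^-$; the maps $F\mapsto\Phi$ and $F\mapsto\Psi$ are isometric because $\|\omega e^{-i\pi z}g\|_{L^2(\R)}=\|g\|_\omega=\|\omega^\# e^{i\pi z}g\|_{L^2(\R)}$.

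Differentiating the relevant branch at a zero $\la$ and using $F(\la)=\omega(\la)\Phi(\la)e^{-i\pi\la}$ for $\la\in\La^+$ (resp.\ $F(\la)=\omega^\#(\la)\Psi(\la)e^{i\pi\la}$ for $\la\in\La^-$), the generating-function kernels collapse to Blaschke kernels:
\[
\sum_{\la\in\La^+}\beta^+_n(\la)F(\la)\frac{G(z)}{G'(\la)(z-\la)}=\omega(z)e^{-i\pi z}S^+_n\Phi,
\]
\[
\sum_{\la\in\La^-}\beta^-_n(\la)F(\la)\frac{G(z)}{G'(\la)(z-\la)}=\omega^\#(z)e^{i\pi z}S^-_n\Psi,
\]
where $S^+_n$ is the operator of Section \ref{s4} with $B^+$ in place of $B$, and $S^-_n$ its $\#$-reflection with $B^-$. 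Writing $S^+_n=\beta^+_n P^+_n$, $P^+_n=I-B^+_n\mathcal P_+(B^+_n)^\#$, which annihilates $B^+_n\mathfrak H_\omega^+\supseteq B^+\mathfrak H_\omega^+$, and combining \eqref{eq:20} with $|\beta^+_n|=1$, $\beta^+_n\to1$, I obtain $S^+_n\Phi\to P_+\Phi$ in $\|\cdot\|_\omega$ for every $\Phi\in\mathfrak H_\omega^+$, where $P_+=I-B^+\mathcal P_+(B^+)^\#$ is the projector of Lemma \ref{le:04} for $B^+$; symmetrically $S^-_n\Psi\to P_-\Psi$ with $P_-=I-B^-\mathcal P_-(B^-)^\#$. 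Through the isometries, the two sums in \eqref{Fpm} converge in $\pw$ to $\omega e^{-i\pi z}P_+\Phi$ and $\omega^\# e^{i\pi z}P_-\Psi$.

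The crux is the reconstruction identity $\omega e^{-i\pi z}P_+\Phi+\omega^\# e^{i\pi z}P_-\Psi=F$. Set $h:=F/G$ on $\R$; since $|B^\pm|=1$ and $\Phi\in\mathfrak H_\omega^+$ we have $h\in L^2(\R,|\omega|^2)$, and the factorization yields $(B^+)^\#\Phi=\Phi/B^+=h$ and $(B^-)^\#\Psi=\Psi/B^-=h$, i.e.\ both defects involve the same $h$. Using $\omega e^{-i\pi z}B^+=G=\omega^\# e^{i\pi z}B^-$ and $\omega e^{-i\pi z}\Phi=F=\omega^\# e^{i\pi z}\Psi$ on $\R$,
\[
\omega e^{-i\pi z}P_+\Phi=F-G\mathcal P_+ h,\qquad \omega^\# e^{i\pi z}P_-\Psi=F-G\mathcal P_- h,
\]
so that, by $\mathcal P_++\mathcal P_-=I$ on $L^2(\R,|\omega|^2)$ (bounded by $(A_2)$),
\[
\omega e^{-i\pi z}P_+\Phi+\omega^\# e^{i\pi z}P_-\Psi=2F-G(\mathcal P_++\mathcal P_-)h=2F-Gh=F .
\]
This gives the second line of \eqref{Fpm}; the $L^2(-\pi,\pi)$ statement then follows by the Fourier transform.

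I expect the main obstacle to be precisely this identity. One must use the correct lower projector $P_-=I-B^-\mathcal P_-(B^-)^\#$ --- the $\#$-reflection of the upper one, obtained from the $\Cp$-construction for $\overline{\La^-}$, rather than a naive formula with $B^-$ and $(B^-)^\#$ interchanged --- and one must notice that the two defect terms carry the single function $h=F/G$, so that $\mathcal P_++\mathcal P_-=I$ telescopes them to $F$ and not to $2F$ or $0$. Everything else --- the adapted factorization, the collapse of the kernels, and the convergence $S^\pm_n\to P_\pm$ --- is a direct transcription of the $\Cp$ argument of Section \ref{s4} and its mirror image.
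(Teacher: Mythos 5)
Your proposal is correct and follows exactly the route the paper itself prescribes for Theorem \ref{prth2}: keep the factorization \eqref{factorization} in place of \eqref{eq:30}, reduce each of the two summands of \eqref{Fpm} to the Section \ref{s4} model-space operators $S^{\pm}_n$ via the isometries $F\mapsto\Phi$, $F\mapsto\Psi$, and pass to the limits $P_{\pm}$. The paper leaves the final step implicit (``the proof goes in a natural way''); your reconstruction identity $\omega e^{-i\pi z}P_+\Phi+\omega^{\#}e^{i\pi z}P_-\Psi=2F-G(\mathcal{P}_++\mathcal{P}_-)h=F$, with $h=F/G$, is precisely the detail that completes that sketch, and it is correct.
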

 
The proof goes in a natural way, one has to replace representation \eqref{eq:30}
by 
\begin{equation}
\label{factorization}
G(z)=\begin{cases}
     \omega(z) B^{+}(z) e^{-i\pi z}, &\text{if $ z\in \Cp$} \\
     \omega^\#(z)B^{-}(z) e^{i\pi z}, & \text{if $z\in \Cm$},
        \end{cases}
\end{equation}
and repeat the argument from the previous sections considering the two summands in \eqref{Fpm} separately.


\section{Universal summation method\label{s2}}

We start with assuming that $\Lambda\subset\mathbb{C}_+$. Later in Subsection \ref{ul} we indicate the changes needed in the general case. As in the previous section  one can apply the Fourier transform $\mathcal{F}$ and reset the  problem into the Paley-Wiener space $\pw$. Thus we have to construct a summation method for the series 
$$
\sum_{\lambda\in \Lambda} (F, G_\lambda)k_\lambda, \ \ F\in\pw,
$$
where $(\cdot,\cdot)$ stays for the standard inner product in $L^2(\mathbb{R})$.

Given any $W:=\{w(\lambda,n)\}$ satisfying Definition \ref{summ} we clearly have
$$\sum_{\lambda\in\Lambda}w(\lambda,n)(F,G_\lambda)k_\lambda\rightarrow F$$
for the dense set of finite linear combinations of $\{k_\lambda\}_{\lambda\in\Lambda}$. Therefore in order to prove that $W$ generates a linear summation method it suffices to prove that the operators
\begin{equation}
T_n: F\rightarrow \sum_{\lambda\in\Lambda}w(\lambda,n)(F,G_\lambda)k_\lambda 
\label{ten}
\end{equation}
are uniformly bounded.

We will prove that for an appropriate choice of $W$ the adjoint operators
$$
T^*_n: F\rightarrow \sum_{\lambda\in\Lambda}w(\lambda,n)(F,k_\lambda)G_\lambda=\sum_{\lambda\in\Lambda}w(\lambda,n)F(\lambda)\frac{G(z)}{G'(\lambda)(z-\lambda)}.
$$
 are uniformly bounded.
 
 
\subsection{Summation matrix} We define the summation matrix $W=\{w(\lambda,n)\}$   by choosing  an appropriate sequence
$\{\alpha_n\}$, $\alpha_n>0$ and   increasing subsets $\Lambda_n \subset \Lambda$, $\cup \Lambda_n=\Lambda$.
This choice will be specified later in Section \ref{result}.

We set   
$$
w_n(z)=\exp\biggl{[}-i\alpha_nl_n\int_{|u|>1\slash2}\biggl{[}\frac{1}{z\slash l_n - u}+\frac{1}{u}\biggr{]}du\biggr{]},\quad z\in\mathbb{C}^+.
$$
So, $w_n(z)$ is an outer function with modulus $1$ in the interval $[-l_n\slash2, l_n\slash2]$ and modulus $e^{-\pi\alpha_n l_n}$
in the set $\mathbb{R}\setminus[-l_n\slash2, l_n\slash2]$.
Put
 $$
 w(\lambda, n)=\begin{cases}
 w_n(\lambda),\quad \lambda\in \Lambda_n\\
0, \quad  \qquad \lambda \not\in \Lambda_n.
\end{cases}
$$
The sequences $l_n \rightarrow\infty$ and $\alpha_n \rightarrow0$ will be chosen later. It is easy to verify that for any $\lambda$, $w_n(\lambda)\rightarrow 1$, $n\rightarrow\infty$.

 
 \subsection {Choice of $\Lambda_n$ and representation of the operators  $T_n^*$ }
 
 Given  sequences $\{l_n\}$, $l_n\to \infty$ and $\{c_n\}$, $c_n>0$ consider the contours 
\begin{multline}
R_n=[l_n,ic_nl_n], \quad L_n=[-l_n,ic_nl_n],\quad
C_n=[-l_n,l_n]\cup R_n\cup L_n,\\ 
\label{Gamma}
\end{multline}
and take $\Lambda_n=\Lambda\cap\inter C_n$. Then, for $F\in \pw$, we have   
 \begin{multline}
 \label{testar}
 T^*_nF(x)=G(x)\sum_{\lambda\in\Lambda_n}\frac{w_n(\lambda)F(x)}{G'(\lambda)(x-\lambda)} \\
= \frac{G(x)}{2\pi i}\int_{C_n}\frac{F(\zeta)w_n(\zeta)}{G(\zeta)(\zeta-x)}d\zeta+\frac{1}{2}F(x)w_n(x)\chi_{[-l_n,l_n]}(x)\\
 =\frac{G(x)}{2\pi i}\biggl{[}\int_{[-l_n,l_n]}+\int_{R_n}+\int_{L_n}\biggr{]}\frac{F(\zeta)w_n(\zeta)}{G(\zeta)(\zeta-x)}d\zeta+\frac{1}{2}F(x)w_n(x)\chi_{[-l_n,l_n]}(x)\\
 =I_{1,n}(x)+I_{2,n}(x)+I_{3,n}(x)+\frac{1}{2}F(x)w_n(x)\chi_{[-l_n,l_n]}(x),
 \end{multline}
here $\chi_{[-l_n,l_n]}$ is the indicator  function of $[-l_n,l_n]$.  The $L^2$-norm of each summand in the right hand side  of \eqref{testar}
will be estimated separately.  Clearly $L^2$-norm of the last summand is bounded by $\|F\|_2$.

  Estimate of $\|I_{1,n}\|$ is straightforward; it follows from the fact that $w_nFG^{-1}$ belongs to the weighted space 
  $$
  L^2(\mathbb{R}, |G|^2):=\{H: \int_\mathbb{R}|H(x)|^2|G(x)|^2<\infty\}.
  $$ 
  and from the fact that, since $|G|^2\in(A_2)$,  the Hilbert transform is bounded in $L^2(\mathbb{R}, |G|^2)$.

It remains to choose the appropriate $\{l_n\}$ and $\{c_n\}$ and then estimate  $\|I_{2,n}\|$ and $\|I_{3,n}\|$.

\subsection{Inner-outer factorization and choice of contours $C_n$} Let $\omega$ be an outer function in $\mathbb{C}_+$ such that $|\omega(x)|=|G(x)|$, $x\in\mathbb{R}$ and let $B$ be the Blaschke product with $\Lambda$ as the zero set. In order to estimate $I_{2,n}$ and $I_{3,n}$ we use the inner-outer factorization  $$e^{i\pi z}G(z)=c\omega(z)B(z),\quad |c|=1.$$
Using the estimate for the upper density of $\Lambda$ and formula $\arg B'(t)=2\sum_\lambda\frac{\Im \lambda}{|t-\lambda|^2}$ we get
$$|\arg B(x)-\arg B(0)|\leq 2\pi\cdot\#\bigl{[} \Lambda\cap\{|z|<2|x|\}\bigr{]}+o(|x|)\lesssim 1+|x|.$$

\begin{lemma}
Let $B$ be a Blaschke product such that $|\arg B(x)|\lesssim 1+|x|$. Then there exists a sequence $l_n\to \infty$ and $\{c_n\}$, $1\leq c_n\leq 10$ and decreasing function $\varepsilon(t)\rightarrow 0$, $t\rightarrow\infty$ such that for the contours   $C_{n}$ defined in \eqref{Gamma}.
\begin{equation}
-\log|B(\lambda)|\leq \varepsilon(|\lambda|)|\lambda|,\quad \lambda\in\bigcup_n C_{n}.
\label{EpsEst}
\end{equation}
\label{plemma}
\end{lemma}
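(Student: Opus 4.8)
The plan is to estimate $-\log|B|$ directly on the contour $C_n$, exploiting the fact that the horizontal base $[-l_n,l_n]$ lies on $\R$, where $|B(x)|=1$ and the left-hand side of \eqref{EpsEst} vanishes identically. Thus only the two slanted sides $R_n$ and $L_n$ of the triangle require work, and by the symmetry $z\mapsto -\bar z$ it suffices to treat $R_n$. The starting point is the Green-potential representation of the Blaschke product: for $z\in\Cp$,
\[
-\log|B(z)|=\sum_{\la\in\La} g(z,\la),\qquad g(z,\la)=\log\bigl|\tfrac{z-\bar\la}{z-\la}\bigr|\ge 0,
\]
together with the elementary identity $g(z,\la)=\half\log\bigl(1+4\,\Im z\,\Im\la\,|z-\la|^{-2}\bigr)$ and the resulting bound
\[
0\le-\log|B(z)|\le 2\,\Im z\sum_{\la\in\La}\frac{\Im\la}{|z-\la|^2},
\]
which should be compared with the boundary identity $\frac{d}{dt}\arg B(t)=2\sum_\la \Im\la\,|t-\la|^{-2}$ already invoked before the statement.

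Next I would convert the hypothesis $|\arg B(x)|\lesssim 1+|x|$ into metric information about $\La$. Since $\arg B$ is increasing with average slope $O(1)$, integrating the boundary identity shows that the counting function $\#\{\la\in\La:|\la|<R\}$ grows at most linearly in $R$; hence at each scale $l$ there are only $O(l)$ zeros with $|\la|\lesssim l$. I would then split $\sum_\la \Im\la\,|z-\la|^{-2}$ according to the depth and the distance of the zeros from $z$. After multiplication by $\Im z\le 10\,|z|$, zeros that are deep ($\Im\la\gg|z|$) or distant ($|z-\la|\gg|z|$) contribute $o(|z|)$, using the linear density together with the Blaschke bound $\sum_\la \Im\la\,(1+|\la|^2)^{-1}<\infty$. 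The only dangerous contribution comes from the moderate zeros ($\Im\la\lesssim|z|$) lying close to the slanted side, of which there are $O(l)$ near scale $l$.

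It remains to choose $l_n$ and $c_n$ so that $R_n$ threads between these moderate zeros with a definite margin; here the two free parameters are essential. For a fixed scale I would let $l$ range over a dyadic window and $c$ over $[1,10]$, parametrise the resulting fan of segments by $z=z(s,c)=l(1-s)+icls$, $s\in[0,1]$, and estimate the measure of the set of parameters on which some nearby zero forces $-\log|B|$ to exceed $\eta\,|z|$. Because the high-potential region is contained in a union of disks centred at the zeros, with radii controlled by $\Im\la$ and total number $O(l)$, a Fubini/pigeonhole argument (the Jacobian of $(s,c)\mapsto z$ being $l^2s$, which degenerates only near the base corner where $-\log|B|\to 0$) produces admissible $l_n\to\infty$ and $c_n\in[1,10]$ for which $-\log|B(z)|\le\eta(l_n)\,|z|$ along $R_n$ with $\eta(l_n)\to 0$. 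Since $|z|\simeq l_n$ on $C_n$, passing to a decreasing majorant of $\eta$ yields the required function $\varepsilon(t)\to 0$.

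The main obstacle is that \eqref{EpsEst} is a pointwise (supremum) statement along the whole slanted side, whereas averaging arguments naturally control only integrals. The delicate points are therefore: (i) guaranteeing that the selected contour keeps its distance from the zeros uniformly in the height $\Im z$, including the transition near the base corners $\pm l_n$ where $\Im z\to 0$; and (ii) ensuring that the admissible margin decays slowly enough that the resulting $\varepsilon(t)$ still tends to $0$. Both are resolved by trading the freedom in $c_n\in[1,10]$ against the linear density of $\La$ supplied by the hypothesis on $\arg B$.
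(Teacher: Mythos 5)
Your overall strategy --- Green-potential bounds for $-\log|B|$ plus a parameter-selection argument to dodge the set where the potential is large --- is the same general spirit as the paper's proof. But two of your steps conceal the real work, and the second one fails at precisely the point you yourself flag as delicate. First, the assertion that the set $\{z:-\log|B(z)|>\eta|z|\}$ is contained in a union of disks centred at the zeros, with radii controlled by $\Im\lambda$ and small total view, is unjustified. A single factor $g(z,\lambda)$ exceeds $\eta|z|$ only on an exponentially small neighbourhood of $\lambda$, while the collective sum can be large far from any individual zero (order $\eta l$ zeros each contributing $O(1)$); moreover, disks of radius $\simeq\Im\lambda$ have view $\Im\lambda/|\lambda|$ from the origin, and $\sum_\lambda \Im\lambda/|\lambda|$ may diverge under the Blaschke condition even when $|\arg B(x)|\lesssim 1+|x|$ (take $\lambda_k=2^k+i2^k/k$). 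Packaging the exceptional set into disks of small total view, with constants depending only on the Blaschke sum, is exactly Hayman's theorem \cite[Lecture 15, Theorem 1]{Lev} together with its uniform variant, Proposition \ref{hayman-type}; the paper invokes these rather than re-deriving them, and your proposal needs this Cartan-type input as well --- it cannot be read off from the elementary bound on $g(z,\lambda)$.

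Second, the corner problem. Every segment $[l,icl]$, $c\in[1,10]$, passes through the same point $l$, so the freedom in $c$ gives no separation whatsoever from zeros clustering near the corner; and letting $l$ range over a dyadic window is not quantified in your argument, since linear counting of $\Lambda$ is compatible with clusters of $\sim\delta l$ zeros within distance $\delta l$ of real points at each scale, which is exactly what a pigeonhole in $l$ would have to exclude. The paper's resolution is a separate idea that your proposal is missing: the hypothesis $|\arg B(x)|\lesssim 1+|x|$ is used not merely for counting but to select the corners themselves, choosing $l_n$ with $(\arg B)'(\pm l_n)=2\sum_\lambda\Im\lambda\,|{\pm l_n}-\lambda|^{-2}\lesssim 1$, which is possible because the average slope of $\arg B$ is bounded. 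This makes the translated sequences $\Lambda\pm l_n$ satisfy the Blaschke condition uniformly, so Proposition \ref{hayman-type} applied to $B(z\pm l_n)$ yields $-\log|B(\lambda)|\le C_B\Im\lambda+C_B$ on the contours (Lemma \ref{BLemma2}). That linear-in-$\Im\lambda$ bound is what controls the part of $C_n$ near $\mathbb{R}$: there one has $\Im\lambda\le|\lambda|/q_n$ for a sequence $q_n\to\infty$, hence a bound $o(|\lambda|)$; away from $\mathbb{R}$ (on the sets $\Delta_n^+$) the ordinary Hayman estimate applies after choosing $c_n$ so that $C_n$ avoids the exceptional disks, which is possible because their view from $l_n$ is made small by the sparseness $l_{n+1}>100\,l_n$ and the choice of $q_n$ with $\sum_n q_n\sigma_n<1/1000$. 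Without the good choice of $l_n$ and the uniform Hayman-type proposition, the pointwise estimate \eqref{EpsEst} along all of $R_n$, down to the corner, cannot be extracted from your averaging argument.
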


We postpone proof of this lemma until Subsection \ref{plemm} and complete the construction of universal summation method.
 

\medskip

\subsection{}   {\bf Estimates of $\|I_{2,n}\|$ and $\|I_{3,n}\|$ }  are similar.  We restrict ourselves to   $\|I_{2,n}\|$ only.   Let $\{C_n\}$ be the contours  chosen in Lemma \ref{plemma}. 
 Fix $\alpha_n=1000\varepsilon(l_n\slash 10)$, where $\varepsilon(t)$ is a function from Lemma \ref{plemma}. We have
$$
\|I_{2,n}\|_{L^2(\mathbb{R})}=\sup_{g\in L^2,\|g\|\leq 1}\biggl{|}\underbrace{\int_{\mathbb{R}}g(x)I_{2,n}(x)dx}_J\biggr{|}.
$$
and 
 $$
 J=\frac{1}{2\pi i}\int_{R_n}\frac{F(\zeta)A(\zeta)}{G(\zeta)}w_n(\zeta)d\zeta=\frac{1}{2\pi i} \int_{R_n}e^{-i\pi \zeta}F(\zeta)\cdot\frac{A(\zeta)}{\omega(\zeta)}\cdot\frac{w_n(\zeta)}{B(\zeta)}d\zeta,
 $$
where
$$
A(\zeta)=\int_\mathbb{R}\frac{g(x)G(x)}{x-\zeta}dx.
$$

By using the  inequalities,  
$$\log|w_n(\zeta)|\leq -\alpha_n l_n\int_{|u|>1\slash2}\frac{\Im \zeta\slash l_ndu}{|\zeta\slash l_n-u|^2}\leq -\alpha_nl_n\slash5,$$  
$\zeta\in R_n$ and also Lemma \ref{plemma}
we  get
$$
\frac{|w_n(\zeta)|}{|B(\zeta)|}\leq\frac{e^{-200\varepsilon(l_n\slash10)l_n}}{e^{-\varepsilon(|\zeta|)|\zeta|}}\leq\frac{e^{-\varepsilon(|\zeta|)|\zeta|}}{e^{-\varepsilon(|\zeta|)|\zeta|}}=1,\quad \zeta\in R_n,
$$

and 
$$
|J|^2\lesssim \int_{R_n}|e^{-i\pi\zeta}F(\zeta)|^2|d\zeta|\cdot\int_{R_n}\frac{|A(\zeta)|^2}{|\omega(\zeta)|^2}|d\zeta|.
$$
From the boundedness of Hilbert transform with weight $|\omega|^2$ we get 
$$
\frac{A(\zeta)}{\omega(\zeta)}\in H^2(\mathbb{C}^+) \ \text{and} \  \biggl{\|}\dfrac{A(\zeta)}{\omega(\zeta)}\biggr{\|}_{H^2(\mathbb{C}_+)}\lesssim \|g\|\leq 1.
$$
On the other hand, $|d\zeta|\bigr{|}_{R_n}$ is a Carleson measure for $H^2(\mathbb{C}_+)$.
Finally,
$$
|J|\lesssim\|e^{-i\pi \zeta}F(\zeta)\|_{H^2(\mathbb{C}_+)}\cdot \biggl{\|}\dfrac{A(\zeta)}{\omega(\zeta)}\biggr{\|}_{H^2(\mathbb{C}_+)}\lesssim\|F\|_{\pw}.
$$


\subsection{} {\bf The proof of Theorem \ref{pwise}} can be obtain in a similar way. 
 Here we give only a sketch of the proof.

We have to prove that for any compact $K$ the operators $V_n: \pw\mapsto L^\infty(K)$
$$
V_n: F\rightarrow \sum_{\lambda\in\Lambda}w(\lambda,n)(F,k_\lambda)G_\lambda=\sum_{\lambda\in\Lambda}w(\lambda,n)F(\lambda)\frac{G(z)}{G'(\lambda)(z-\lambda)}.
$$
 are uniformly bounded. We fix $R > 1$ such that $K\subset \{|z|< R\}$ and contours $C_n$. 

For any $x\in K$, we have   
 $$
 V_nF(x)=G(x)\sum_{\lambda\in\Lambda_n}\frac{w_n(\lambda)F(x)}{G'(\lambda)(x-\lambda)} $$
$$= \frac{G(x)}{2\pi i}\biggl{[}\int_{\zeta\in\mathbb{C}^-,|\zeta|=2R}+\int_{[-l_n,l_n]\setminus[-2R,2R]}+ \int_{R_n}+\int_{L_n}\biggr{]}\frac{F(\zeta)w_n(\zeta)}{G(\zeta)(\zeta-x)}d\zeta=I_1+I_2+I_3+I_4.$$
Integrals $I_2, I_3, I_4$ can be easily estimated. On the other hand,
$$I_1\leq2\pi R\sup_{|z|=2R}|F(z)|\cdot\sup_{|z|=2R}|G(z)|^{-1}\lesssim\|F\|.$$

\subsection{\label{ul}}{\bf In the general case   $\Lambda$ has points in both halfplanes $\mathbb{C}_\pm$.} We use the inner-outer factorization \eqref{factorization} and then  choose the  contours $C^-_n$ for the points in the lower halfplane $\mathbb{C}_-$ similarly by using the Blaschke product $B^-$. Afterward we  construct the  operators 
$T^-_n$ and let  $T_n=T^+_n+T^{-}_n$. 
The same reasonings prove that  the   operators $T^-_n$ are  uniformly bounded and so $T_n$ is uniformly bounded and $T_nF\rightarrow F$ for a dense set of $F\in \pw$.$\qed$

 
\subsection{Proof of Lemma \ref{plemma}. Preliminary estimates} 
\label{subsec}
We need the following estimate:
\begin{lemma}
Let $B$ be a Blaschke product such that $|\arg B(x)|\lesssim 1+|x|$. Then there exists a sequence $l_n\to \infty$ and $\{c_n\}$, $1\leq c_n\leq 10$ and constant $C = C(B)$ such that for the contours   $C_{n}$ defined in \eqref{Gamma}.
\begin{equation}
-\log|B(\lambda)|\leq C_B\Im\lambda + C_B,\quad \lambda\in\bigcup_n C_{n}.
\label{Best2}
\end{equation}
Moreover, this inequality holds for all $c_n\in U_n$ , where $U_n\subset[1,10]$ is some set with $|U_n|>8$.
\label{BLemma2}
\end{lemma}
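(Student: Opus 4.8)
The plan is to estimate the Green potential
$-\log|B(z)|=\frac12\sum_{\la\in\La}\log\bigl(1+\frac{4\Im z\,\Im\la}{|z-\la|^2}\bigr)$
directly on the slanted sides $R_n,L_n$ of the contour \eqref{Gamma}; on the base $[-l_n,l_n]\subset\R$ one has $|B|=1$, so \eqref{Best2} is trivial there, and by symmetry it suffices to treat $R_n$. Every summand is nonnegative, and the two free parameters in \eqref{Gamma} — the scale $l_n$ and the slope $c_n$ — are precisely what I would use to steer the contour away from the zeros $\La$ of $B$.

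First I would convert the hypothesis into density information. Since $(\arg B)'(t)=2\sum_\la\frac{\Im\la}{|t-\la|^2}\ge0$, the function $\arg B$ is nondecreasing, and the nonnegative measure $d\sigma=\frac1{2\pi}(\arg B)'\,dt$ — the balayage of $\sum_\la\delta_\la$ onto $\R$ — obeys $\sigma([0,x])\lesssim 1+|x|$. From this I read off that $\sum_\la\frac{\Im\la}{|z-\la|^2}\lesssim\frac1{\Im z}\,\sigma([\Re z-\Im z,\Re z+\Im z])$ up to harmless tails, and that the number of $\la$ in a disc $D(w,r)\subset\Cp$ is $\lesssim\frac1{\Im w}\sigma(\text{shadow of }D(w,r))$. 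Splitting the sum at $|z-\la|=\Im z/2$, the \emph{far} part ($|z-\la|\ge\Im z/2$) is $\le2\Im z\sum_\la\frac{\Im\la}{|z-\la|^2}$ by $\log(1+u)\le u$, hence $\lesssim\Im z$ as soon as the local density of $\sigma$ near the contour is $O(1)$ at every scale; the large scales are automatic from the global linear growth, and the delicate small scales near the base are what the choice of $l_n$ secures: because $\sigma$ grows only linearly its irregular centres are sparse, so I can pick $l_n\to\infty$ with $\sigma([\,\pm l_n-t,\pm l_n+t\,])\lesssim 1+t$ for all $t>0$ and with no low zeros sitting directly below $\pm l_n$.

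The \emph{near} part ($|z-\la|<\Im z/2$, forcing $\Im\la\asymp\Im z$) is the heart of the matter, and I would control its logarithmic singularities by averaging over the slope $c\in[1,10]$. Grouping the near zeros by dyadic height $\Im z\asymp2^k$ — there are $N_k\lesssim2^k$ of them near the contour by the counting bound above — and applying Cartan's lemma to $\prod_{\la\ \mathrm{near}}(z-\la)$, I would show that off an exceptional union of discs of total radius $\lesssim2^k e^{-cC}$ the near part is $\le C\,2^k\asymp C\,\Im z$. At height $\asymp2^k$ the contour $R(c)=[l_n,icl_n]$ sweeps horizontally at rate $\asymp2^k$ in $c$, so the slopes for which $R(c)$ meets these discs have measure $\lesssim e^{-cC}$ at that scale; summing $\sum_\la e^{-C\Im\la}\asymp\sum_k N_k e^{-C2^k}$ and taking $C=C_B$ large forces the total bad-slope measure below $1$. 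Hence $U_n:=\{c\in[1,10]:R(c)\cup L(c)\text{ is entirely good}\}$ has $|U_n|>8$, and any $c_n\in U_n$ yields \eqref{Best2}.

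The genuine obstacle is to make the near-part estimate hold \emph{uniformly along the whole contour}, i.e. for one slope at \emph{all} heights at once, since the hypothesis still permits tight clusters of $\sim2^k$ zeros to occur simultaneously at many scales $2^k\le l_n$. Passing from the two-dimensional slope-average of a logarithmically singular quantity to an honest supremum along the one-dimensional $R(c)$, while keeping the per-scale exceptional measures \emph{summable} — rather than merely bounded by a scale-independent $e^{-cC}$, which would cost a fatal factor $\log l_n$ — is exactly where Cartan's collective lower bound, the linear growth of $\sigma$, and the joint use of both parameters $(l_n,c_n)$ must be combined. For this reason the selection of $l_n$ in the second step and the slope-averaging in the third cannot be carried out independently; they have to be interleaved.
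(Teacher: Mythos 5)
Your overall architecture matches the paper's: first select $l_n$ from density information carried by $\arg B$, then select the slope $c_n$ from a subset of $[1,10]$ of measure $>8$ so that the slanted sides of \eqref{Gamma} dodge the discs on which $\log|B|$ is abnormally negative. But the proof is not complete, and the hole is exactly the one you flag in your final paragraph. Moreover, your bookkeeping there is internally inconsistent: with Cartan exceptional radii $\asymp 2^k e^{-cC}$ the bad-slope measure per dyadic scale is $\asymp e^{-cC}$, \emph{scale-independent}, so over the $\asymp\log l_n$ scales along the contour the total bad measure is unbounded in $n$; whereas the summable form $\sum_k N_k e^{-C2^k}$ you invoke would require exceptional radii $\asymp 2^k N_k e^{-C2^k}$, and with radii that small Cartan's lower bound only yields $-\log|B(z)|\lesssim C(\Im z)^2$ on the good set, which is useless for \eqref{Best2}. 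No choice of the constant $C$ repairs this; what is missing is a stronger input at the stage where $l_n$ is chosen.

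That stronger input is the first line of the paper's proof: choose $l_n$ so that the \emph{pointwise} bound $(\arg B)'(\pm l_n)\lesssim 1$ holds, i.e. $\sum_l y_l\bigl((\pm l_n-x_l)^2+y_l^2\bigr)^{-1}\lesssim 1$ uniformly in $n$ (such points exist in abundance because $\int_0^x(\arg B)'\lesssim 1+|x|$). Your criterion $\sigma([\pm l_n-t,\pm l_n+t])\lesssim 1+t$ is strictly weaker: it gives only $N_k\lesssim 2^k$. The pointwise bound says the translated sequences $\La\mp l_n$ satisfy a \emph{uniform Blaschke condition}, and this upgrades the counting to the summability $\sum_k\epsilon_k\leq M$ uniformly in $n$, where $\epsilon_k:=N_k2^{-k}$, since a zero at height $\asymp 2^k$ within distance $\asymp 2^k$ of the contour contributes $\asymp 2^{-k}$ to the Blaschke sum at $\pm l_n$. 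Now your Cartan step closes: take exceptional radii $\asymp 2^k e^{-c/\epsilon_k}$ at scale $k$; the near part is still $\lesssim N_k(1+c/\epsilon_k)\asymp c\,2^k\asymp c\,\Im z$ off the exceptional discs, while the bad-slope measure at scale $k$ is $\asymp e^{-c/\epsilon_k}\leq \epsilon_k/c$, so the total bad measure is $\leq M/c<1$ once $c=C(B)$ is large, giving $|U_n|>8$. The paper packages precisely this mechanism as the Hayman-type statement (Proposition \ref{hayman-type}) applied to the shifted products $B(z\pm l_n)$: the exceptional discs there have small total \emph{view} $\sum_m r_m/|w_m|<\varepsilon$, with a constant depending only on $\varepsilon$ and the Blaschke sum (this is why the uniform Blaschke condition at $\pm l_n$ is needed), and the view of a disc is comparable to the measure of slopes of rays through the origin that meet it --- which is exactly the scale-uniform slope control you lacked. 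So the correct fix is not to interleave your two steps, but to strengthen the first one.
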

\noindent{ \it Proof}
Let $\{z_l\}$ be the zero set of Blaschke product $B$, $z_l=x_l+iy_l$. We choose the points $\{l_n\}_{n=1}^\infty\subset\mathbb{R}$ so that $\sup_n(\arg B)'(\pm l_n)\lesssim 1$. Hence,
$$
\sup_n(\arg B)'(\pm l_n)=\sup_n\sum_{l}\frac{2y_l}{(\pm l_n-x_l)^2+y^2_l}<\infty,
$$
and the sequences $\Lambda\pm l_n$ uniformly satisfy the Blaschke condition. 

Hayman theorem (see, \cite[Lecture 15, Theorem 1]{Lev}) states that $|\log|B(z)||=o(|z|)$ for all $z\in\mathbb{C}_+$ except some set of finite (small) view.  We need  the following version of this result.
\begin{proposition}
For any $\varepsilon>0$ there exists a family of disks $D_m(w_m,r_m):=\{|z-w_m|<r_m\}$ such that 
$$\sum_m\frac{r_m}{|w_m|}<\varepsilon,\qquad |\log|B(z)||\leq c|z|,\quad z\not\in\cup_mD_m(w_m, r_m),$$
where $c$ depends only on $\varepsilon$ and the sum of the Blashke series $\sum_{n}\dfrac{y_n}{x^2_n+y^2_n}$. 
\label{hayman-type}
\end{proposition}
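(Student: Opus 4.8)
The plan is to estimate the nonnegative potential
$$-\log|B(z)| = \sum_{\lambda}\frac12\log\Bigl(1 + \frac{4\,\Im z\,\Im\lambda}{|z-\lambda|^2}\Bigr),\qquad z\in\Cp ,$$
which is $\ge 0$ since $|B|\le 1$ on $\Cp$; as $\log|B|\le 0$ there, the asserted bound $|\log|B(z)||\le c|z|$ is exactly the lower estimate $-\log|B(z)|\le c|z|$. First I would decompose $\Cp$ into Whitney boxes $Q$ of side $\ell_Q\simeq\Im c_Q$, so that $\ell_Q/|c_Q|$ plays the role of a single dyadic parameter, and for $z$ in a fixed box split the sum over $\lambda$ into three groups: zeros lying at a different modulus-scale than $z$; same-scale zeros that are far from $z$ in the sense $|z-\lambda|^2\ge\Im z\,\Im\lambda$; and same-scale zeros with $|z-\lambda|^2<\Im z\,\Im\lambda$, which are the only ones producing a logarithmic singularity. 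A short computation (using $\Im z\simeq\Im\lambda\simeq\ell_Q$) shows that the singular zeros for $z\in Q$ all lie in a fixed dilate of $Q$.

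The first two groups carry no exceptional disks. For them I would use $\log(1+u)\le u$ together with the elementary bounds on $|z-\lambda|$ available at each relative scale to bound their total contribution by $\lesssim|z|\sum_\lambda\frac{\Im\lambda}{|\lambda|^2}=|z|\,S$, where $S$ denotes the Blaschke sum; this is the routine part, requiring only convergence of the Blaschke series.

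The heart of the matter is the singular group, where I would invoke Cartan's covering lemma box by box. Let $n_Q$ be the number of zeros in a fixed dilate of $Q$; Cartan applied to these with threshold $h_Q=\delta_Q\ell_Q$ produces disks of total radius $\le 2e\,\delta_Q\ell_Q$ outside of which $\sum_{\lambda}\log\frac1{|z-\lambda|}\le n_Q\log\frac1{\delta_Q\ell_Q}$. Since $|z-\bar\lambda|\simeq\ell_Q$ for $z,\lambda\in Q$, the $\log\ell_Q$ factors cancel and the singular contribution of $Q$ is at most $n_Q\log\frac{C}{\delta_Q}$. I would then choose $\delta_Q=\theta\,n_Q/|c_Q|$ with $\theta\simeq\varepsilon/S$. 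Two things happen at once. The disks contribute $\sum_Q\frac{2e\delta_Q\ell_Q}{|c_Q|}=2e\theta\sum_Q n_Q\frac{\ell_Q}{|c_Q|^2}\simeq 2e\theta\,S<\varepsilon$, using that the Blaschke sum equals, up to a bounded overlap factor, $\sum_Q n_Q\ell_Q/|c_Q|^2$; this is precisely the required smallness of $\sum_m r_m/|w_m|$. Simultaneously, writing $t=n_Q/|c_Q|$, the singular contribution of $Q$ equals $|c_Q|\,t\log\frac{C}{\theta t}$, and the extremal inequality $t\log\frac{C}{\theta t}\le \frac{C}{e\theta}$ (maximised at $t=C/(e\theta)$) bounds it by $\frac{C}{e\theta}|c_Q|\lesssim\frac{S}{\varepsilon}|z|$, with a constant independent of the zero counts.

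The main obstacle is exactly this simultaneous control. A per-zero or per-box radius small enough to keep $\sum_m r_m/|w_m|<\varepsilon$ is naively far too small to absorb the logarithmic singularities, and enlarging it destroys the smallness; moreover a uniform per-scale treatment makes $\sum_m r_m/|w_m|$ diverge over the infinitely many scales. The resolution is the scale-dependent Cartan threshold $\delta_Q\propto n_Q/|c_Q|$ combined with the bound $t\log\frac{C}{\theta t}\le C/(e\theta)$, which removes all dependence on the counts $n_Q$ and yields a constant $c$ depending only on $\varepsilon$ and the Blaschke sum $S$, as claimed. Summing the three groups over the relevant boxes then completes the estimate.
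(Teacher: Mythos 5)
Your handling of the singular group is the strongest part of the proposal: the scale-dependent Cartan threshold $\delta_Q\simeq\theta n_Q/|c_Q|$ combined with the extremal bound $t\log\frac{C}{\theta t}\le\frac{C}{e\theta}$ is correct and does remove the dependence on the counts $n_Q$; the different-scale group is also fine. The proof breaks, however, at the second group, and the break is not reparable by adjusting constants: the claim that same-scale zeros with $|z-\lambda|^2\ge\Im z\,\Im\lambda$ contribute $\lesssim |z|S$ ``requiring only convergence of the Blaschke series'' is simply false. Concretely, fix $S$ and $\varepsilon$, let $\eta\in(0,1)$ be a small constant to be chosen, let $R$ be large and $\delta$ tiny, and place $N=\lfloor SR^2/\delta\rfloor$ zeros at height $\delta$, equally spaced on the interval $[R-\eta R/2,\,R+\eta R/2]$; the Blaschke sum is $\simeq S$. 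Take $z=R+i$. Every zero $\lambda$ satisfies $|\lambda|\simeq|z|$ and $|z-\lambda|^2\ge(1-\delta)^2>\delta=\Im z\,\Im\lambda$, so every zero belongs to your second group (none is singular), and yet, since $\log(1+u)\ge u/2$ for $0\le u\le 1$,
\begin{equation*}
-\log|B(z)|=\sum_\lambda\tfrac12\log\Bigl(1+\tfrac{4\delta}{|z-\lambda|^2}\Bigr)
\;\gtrsim\;\delta\,\frac{N}{\eta R}\int_{|x|\le \eta R/2}\frac{dx}{x^2+1}
\;\simeq\;\frac{N\delta}{\eta R}\;\simeq\;\frac{S}{\eta}\,|z|.
\end{equation*}
Choosing $\eta$ small compared with $\varepsilon$ (say $\eta\simeq\varepsilon^2$), this exceeds the constant $c\simeq S(1+1/\varepsilon)$ that your argument produces. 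Moreover $z$ lies outside your entire exceptional set: the zeros occupy Whitney boxes of side $\simeq\delta$, each containing $n_{Q'}\simeq N\delta/(\eta R)$ zeros, so the Cartan disks attached to those boxes have total radius $\simeq\theta\, n_{Q'}\ell_{Q'}/|c_{Q'}|\simeq\varepsilon\delta/\eta$ and stay within height $O(\delta/\eta)$ of the real axis, while the box containing $z$ (and any fixed dilate of it) contains no zeros at all and hence generates no disks. So with your disks and your constant the conclusion of the proposition genuinely fails at $z$.

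The diagnosis is that your construction shields $z$ only against \emph{vertical} clustering (many zeros inside the dilate of the box containing $z$), but is blind to \emph{horizontal} clustering: Blaschke mass of order $S$ packed into a window of length $\eta|z|\ll|z|$ at small height forces $-\log|B|\gtrsim (S/\eta)|z|$ throughout a region extending to height $\simeq\varepsilon|z|$ above the window. To obtain the bound $c|z|$ with $c=c(\varepsilon,S)$ one must excise a disk of radius $\simeq\varepsilon|z|$ around each such heavy cluster; the view of one such disk is affordable, and in fact the required radius is proportional to the local Blaschke mass, so the total view can be kept $<\varepsilon$ --- but this covering of the \emph{non-singular} bad set, not Cartan's lemma, is the actual core of Hayman's theorem. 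This is precisely the mechanism supplied by the proof in Levin's book (Lecture 15), which is what the paper itself invokes (its ``proof'' of the proposition is a one-line reference to that argument, with the remark that one trades the $o(|z|)$ conclusion for uniformity of $c$). So your proposal is a genuinely independent attempt, and its Cartan part is a nice ingredient, but as written it omits the step where the real difficulty lies; the second group needs its own exceptional disks and its own view bookkeeping before the three groups can be summed.
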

In comparison with the original Hayman theorem we require a weaker estimate for $\log|B(z)|$; instead we demand  uniformity of the constant $c$.
The proof of Proposition \ref{hayman-type} can be obtained in the same way as in \cite{Lev}.

We apply Proposition \ref{hayman-type} to $B(z\pm l_n)$: there exists $c_n\in[1,10]$  so that,
$$|B(z\pm l_n)|\gtrsim e^{-c|z|},\quad z\in\{\mp t+ ic_n t, t>0\}.$$
If  $z\in\{\mp t+ ic_n t, t>0\}$, then $|z|\simeq \Im z$ and we get the required estimate for $B$ and contours $C_{n}$ as in \eqref{Gamma}. $\qed$

\subsection{Proof of Lemma \ref{plemma}\label{plemm}}
We choose some contours $C_{n}$ from Lemma \ref{BLemma2}.
Now we apply Hayman theorem for the Blashke product $B$. So there exists a sequense of disks $D_n=\{z: |z-z_n|<r_n\}$
of finite view 
\begin{equation}
\sum_n\frac{r_n}{|z_n|}<\frac{1}{1000},
\label{view}
\end{equation}
such that $-\log|B(z)|\leq \varepsilon_1(|\lambda|)|\lambda|$,
 $\lambda\notin \cup_n D_n$, where $\varepsilon_1$ is some function, $\varepsilon_1(t)\rightarrow0$.

Put
$$\Delta_n:=\conv\{l_n,il_n,10il_n\}.$$

Let us consider $\sigma_n = \sum_{k\in \mathcal{N}_n}\frac{r_k}{|z_k|}$, $\mathcal{N}_n$ is a set of all indices $s$ such that
$D_s\cap \Delta_n\neq\emptyset$. If $l_{n+1}>100 l_n$, then $\mathcal{N}_n\cap\mathcal{N}_m=\emptyset$, $n\neq m$.
So, $\sum_n\sigma_n<\frac{1}{1000}$.

We fix a sequence $q_n$ increasing to infinity such that $\sum_nq_n\sigma_n<1\slash 1000$. 

Put 
$$\Delta^+_n=\{z\in\Delta_n: q_n\Im z \geq |\Re z|\}.$$
If for some disk $D_k$, $D_k\cap\Delta^+_n\neq\emptyset$, then
$$\frac{r_k}{|z_k-l_n|}\leq 10q_n\frac{r_k}{|z_k|}.$$

So, $\sum_{k: D_k\cap \Delta^+_n\neq \emptyset}\frac{r_k}{|z_k-l_n|}<\frac{1}{100}.$
Hence, there exists contours $C_n$ as in Lemma \ref{plemma} such that $C_n\cap D_k=\emptyset$ for any $D_k$
such that $D_k\cap\Delta^+_n\neq\emptyset$ and, hence, we have estimate
$$-\log|B(z)|\leq \varepsilon_1(|\lambda|)|\lambda|,\quad \lambda\in C_n\cap\Delta^+_n.$$ 
On the other hand from Lemma \ref{plemma} we get
$$-\log|B(\lambda)|\leq C_B\Im\lambda+C_B\leq5\frac{C_B}{q_n}|\lambda|, \quad\lambda\in C_n\setminus\Delta^+_n.$$
\qed


\subsection{\label{result}}{\bf We collect all pieces together} in order to formulate the final result.  
\begin{theorem} Let $C^\pm_n(=C^\pm_n(-l_n,l_n,\pm ic_nl_n))$ be the set of triangle contours from Lemma \ref{plemma} and 
$B^\pm$ be the Blashke products with zero sets $\Lambda\cap \mathbb{C}^\pm$ respectively. Then the matrix
$$
w(n,\lambda)=\begin{cases}w^\pm_n(\lambda),\quad\lambda\in\mathbb{C}^\pm,\quad\lambda\in\inter C^\pm_n\\
0,\text{ otherwise }\end{cases}
$$
generates a linear summation method for the series \eqref{mainS}, \eqref{interp}.
\end{theorem}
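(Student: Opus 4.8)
The plan is to reduce the assertion to the uniform boundedness of the family of operators $T_n$ from \eqref{ten}. As observed at the start of Section \ref{s2}, for any admissible matrix $W$ the sums $S_n(W,f)$ already converge to $f$ on the dense set of finite linear combinations of the reproducing kernels $\{k_\lambda\}$; since equibounded operators that converge on a dense set converge everywhere, it suffices to show $\sup_n\|T_n\|<\infty$. By duality this is equivalent to bounding the adjoints $T_n^*$ uniformly, and because $T_n=T_n^++T_n^-$ splits into the contributions of $\Lambda^+=\Lambda\cap\mathbb{C}_+$ and $\Lambda^-=\Lambda\cap\mathbb{C}_-$, it is enough to treat each half-plane in turn. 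I would carry out the argument for $T_n^{*,+}$ in full and obtain the bound for $T_n^{*,-}$ by the symmetric reasoning, replacing $B^+,\omega$ by $B^-,\omega^\#$ and reflecting the contours into $\mathbb{C}_-$. Since the Fourier transform identifies \eqref{mainS} in $L^2(-\pi,\pi)$ with \eqref{interp} in $\pw$ unitarily, working in $\pw$ proves both statements at once.

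First I would fix the factorization \eqref{factorization} of $G$ and, for $F\in\pw$, write $T_n^{*,+}F$ as a residue sum over $\Lambda_n^+=\Lambda^+\cap\inter C_n^+$. Collapsing the residues into a contour integral along $C_n^+$ produces the representation \eqref{testar}, so that $T_n^{*,+}F=I_{1,n}+I_{2,n}+I_{3,n}$ plus the boundary term $\frac{1}{2}F\,w_n\chi_{[-l_n,l_n]}$. The boundary term is controlled by $\|F\|_2$ because $|w_n|\le 1$ on $\mathbb{R}$. For the segment integral $I_{1,n}$ I would note that $w_nFG^{-1}\in L^2(\mathbb{R},|G|^2)$ and that, since $|G|^2\in(A_2)$, the Hilbert transform is bounded on this weighted space, giving $\|I_{1,n}\|_2\lesssim\|F\|_{\pw}$ uniformly in $n$.

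The heart of the matter, and the main obstacle, is the uniform estimate of the slanted-side integrals $\|I_{2,n}\|$ and $\|I_{3,n}\|$, which are symmetric, so I would treat $I_{2,n}$ only. Testing against $g\in L^2$ with $\|g\|\le1$, set $A(\zeta)=\int_\mathbb{R}\frac{g(x)G(x)}{x-\zeta}\,dx$ and reorganize the line integral over $R_n$ as the product $e^{-i\pi\zeta}F(\zeta)\cdot\frac{A(\zeta)}{\omega(\zeta)}\cdot\frac{w_n(\zeta)}{B(\zeta)}$. The delicate factor is the last one: on $R_n$ the outer weight decays like $|w_n(\zeta)|\le e^{-\alpha_n l_n/5}$, while $|B(\zeta)|$ may itself be small. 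Choosing the contours $C_n^+$ as in Lemma \ref{plemma} and setting $\alpha_n=1000\,\varepsilon(l_n/10)$ balances these two effects exactly, forcing $|w_n(\zeta)|/|B(\zeta)|\le1$ on $R_n$. With that factor neutralized, Cauchy--Schwarz splits $|J|^2$ into an integral of $|e^{-i\pi\zeta}F|^2$ and one of $|A/\omega|^2$ along $R_n$; the first is controlled since $e^{-i\pi z}F\in H^2(\mathbb{C}_+)$ and $|d\zeta|$ restricted to $R_n$ is a Carleson measure for $H^2(\mathbb{C}_+)$ (the slope $c_n\in[1,10]$ keeps the contour non-tangential), while the second is bounded because $A/\omega\in H^2(\mathbb{C}_+)$ with norm $\lesssim\|g\|\le1$, again by the $(A_2)$-boundedness of the Hilbert transform. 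Combining the four estimates gives $\sup_n\|T_n^{*,+}\|<\infty$, the symmetric argument gives $\sup_n\|T_n^{*,-}\|<\infty$, and hence $\sup_n\|T_n\|<\infty$, which completes the proof. I expect the only genuinely nontrivial point to be the $w_n/B$ balancing on the slanted sides, as the remaining estimates are routine applications of the $(A_2)$ machinery and the Carleson property.
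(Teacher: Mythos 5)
Your proposal is correct and follows essentially the same route as the paper: reduction to uniform boundedness of $T_n$ via density of finite combinations of $\{k_\lambda\}$, the contour representation \eqref{testar} with the four-term splitting, the $(A_2)$/Hilbert-transform bound for $I_{1,n}$, and the balancing of $|w_n|/|B|$ on the slanted sides via Lemma \ref{plemma} with $\alpha_n=1000\,\varepsilon(l_n/10)$, followed by Cauchy--Schwarz and the Carleson-measure estimate, with the two half-planes treated separately exactly as in the paper's Subsection \ref{ul}. No gaps to report.
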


It worth to be mentioned  that   the sequence of contours $C_n$ needs not be  sparse, if, say, 
 $\Lambda\subset\mathbb{C}^+$ and the corresponding Blaschke product $B$ satisfies $(\arg B)'(x)\lesssim1$, then the points $l_n $ can be chosen to be an arbitrary sequence tending to infinity.


\subsection{Hereditary completeness of $\mathcal{E}(\Lambda)$\label{hc}} Here we give a direct proof of the 
hereditary completeness of $\mathcal{E}(\Lambda)$ under the assumptions of Theorem \ref{main}.  The idea of this proof comes from \cite{BBB}.

Assume that there exists $f$ as in \eqref{nonhered}.  One can easily see that in this case there exists a partition $\Lambda=\Lambda_1\cup\Lambda_2$ such that 
$$
{\rm span}\{\{\phi_\lambda\}_{\lambda\in \Lambda_1} \cup \{e^{i\lambda t}\}_ {\lambda\in \Lambda_2}\}
$$
is not dense in $L^2(-\pi,\pi)$ so there exists a non-zero $h\in L^2(-\pi,\pi)$ such that  
  $$
  (h, e^{i\lambda t})(\phi_\lambda, h)=0,\  \lambda\in\Lambda.
  $$
This can be rewritten in the  Paley-Wiener space. Let  $H=\mathcal{F}h$ then   
 $$
 (H,k_\lambda)(G_\lambda, H)=0, \text{ for any } \lambda\in\Lambda.
 $$
We  use the Shannon-Kotelnikov-Whittaker formula  
$$
H(z)=\sum_{n\in\mathbb{Z}}H(n)\frac{\sin(\pi(z-n))}{\pi(z-n)}=\sum_{n\in\mathbb{Z}}H(n)k_n(z).
$$
So, $(G_\lambda, H)=\frac{1}{G'(\lambda)}\sum_{n\in\mathbb{Z}}\frac{G(n)\overline{H(n)}}{\lambda-n}$. 
Consider the meromorphic function 
$$
H(z)\sum_n\frac{\overline{H(n)}G(n)}{z-n};
$$ 
this function vanishes on $\Lambda$. So,
\begin{equation}
H(z)\cdot\sin(\pi z)\sum_n\frac{\overline{H(n)}G(n)}{z-n}=G(z)S(z)
\label{prod}
\end{equation}
for some entire function $S$.
It is known (see, \cite[Lemma 2.2]{BBB}) that, for appropriate $c_0$
$$S(z)=\sin(\pi z)\sum_n\frac{|H(n)|^2}{z-n}+c_0\sin(\pi z).$$
Consider the measure  $\mu_0=\sum_{n\in\mathbb{Z}}\delta_n$. Relation \eqref{prod} takes the form
$$
H(x)\int_\mathbb{R}\frac{\overline{H(t)}G(t)d\mu_0(t)}{x-t}=G(x)\biggl{(}c_0+\int_\mathbb{R}\frac{|H(t)|^2d\mu_0(t)}{x-t}\biggr{)}.
$$
 A similar relation holds for any measure   $\mu_\alpha=\sum_{n\in\mathbb{Z}}\delta_{n+\alpha}$, $\alpha\in[0,1)$, and, hence, the analogous equation is true for the Lebesque measure $dx = \int_0^1\mu_\alpha d\alpha$, $\alpha\in (0,1)$ so integration in
 $\alpha\in (0,1)$ gives
 $$
 \frac{H(x)}{G(x)}\int_\mathbb{R}\frac{\overline{H(t)}G(t)dt}{x-t}= c+\int_\mathbb{R}\frac{|H(t)|^2dt}{x-t}.
 $$
Using the boundedness of Hilbert transform with the weight $|G(x)|^2$ we see that left hand side belongs to $L^1(\mathbb{R} )$. 
In case $c=0$ this immediately yields us to contradiction.  If $c=0$ we use a well known fact:
$$
m\bigl{\{}x:\bigl{|}\int_\mathbb{R}\frac{|H(t)|^2dt}{x-t}\bigr{|}>s\bigr{\}}\gtrsim \frac{1}{s}, \ s\rightarrow 0,
$$
 here $m$ is Lebesgue measure, and  thus  the function $c+\int_\mathbb{R}\frac{|H(t)|^2dt}{x-t}$ is not summable even if $c=0$. We arrive to a contradiction.

\end{document}